\documentclass[11pt]{amsart}
\issueinfo{00}{0}{Month}{Year}
\copyrightinfo{2008}{University of Calgary}
\pagespan{1}{2}
\PII{ISSN 1715-0868}
\usepackage{graphicx}
\usepackage{epstopdf}

\usepackage{amsthm,amssymb,amsxtra,amscd,amsmath}
\usepackage{verbatim}
\usepackage{multicol}
\usepackage{url}
\usepackage{algorithmic}
\usepackage{multirow}
\newtheorem{theorem}{Theorem}[section]

\newtheorem{lemma}[theorem]{Lemma}
\newtheorem{corollary}[theorem]{Corollary}

\newtheorem{proposition}[theorem]{Proposition}

\newtheorem{example}[theorem]{Example}
\newtheorem*{example*}{Example}

\usepackage{mathtools}
\usepackage{subcaption}
\newtheoremstyle{myexample}{3pt}{3pt}{\rmfamily}{}{\itshape}{:}{ }{\thmname{#1}\thmnumber{ #2}\thmnote{ (#3)}}
\theoremstyle{myexample}

\newtheoremstyle{myremark}{3pt}{3pt}{\rmfamily}{}{\itshape}{:}{ }{\thmname{#1}}
\theoremstyle{myremark}

\newtheorem*{observation*}{Observation}

\newtheoremstyle{conjecture}{3pt}{3pt}{\itshape}{}{\bfseries}{.}{ }{\thmname{#1}\thmnote{ (#3)}}
\theoremstyle{conjecture}

\newtheorem*{question*}{Question}

\newtheorem{theorem*}{Theorem}

\numberwithin{equation}{section}



\usepackage{MnSymbol}
\newcounter{algorithm}
\setcounter{algorithm}{0}

\def \ga {{\gamma}}
\def \ze {{\zeta}}

\def \I {\mathbb{I}}
\def \A {\mathbb{A}}
\def \B {\mathbb{B}}
\def \C {\mathbb{C}}
\def \D {\mathbb{D}}

\usepackage{mathtools}
\usepackage{pgf,tikz}
\usepackage{array}
\tikzstyle{vertex}=[circle,draw, inner sep=0pt, minimum size=4pt] 
\newcommand{\vertex}{\node[vertex]}
\usetikzlibrary{decorations.markings,shapes,positioning}

\begin{document}
\include{cdmstdcmds}
\title{On the double covers of a line graph}
\author{Shivani Chauhan}
\address{Department of 
Mathematics, Shiv Nadar 
Institution of Eminence, Dadri, U.P 201314, India} 
\email{sc739@snu.edu.in}

\author{A. Satyanarayana Reddy}
\address{Department of 
Mathematics, Shiv Nadar 
Institution of Eminence, Dadri, U.P 201314, India}
\email{satya.a@snu.edu.in}
\date{(date1), and in revised form (date2).}
\subjclass[2000]{05C50, 05C99}
\keywords{Line graph, edge adjacency matrix of graph, Ihara zeta function of a graph, Covering graph}
\thanks{The authors would like to thank the handling editor and the anonymous reviewers for their careful reading of the manuscript.}
\begin{abstract}
Let $L(X)$ be the line graph of graph $X$. Let $X^{\prime\prime}$ be the Kronecker product of $X$ by $K_2$. In this paper, we see that $L(X^{\prime\prime})$ is a double cover of $L(X)$.
We define the symmetric edge graph of $X$, denoted as 
$\ga(X)$ which is also a double cover of $L(X)$. We study various properties of $\ga(X)$ in relation to $X$ and the relationship amongst the three double covers of $L(X)$ that are $L(X^{\prime\prime}),\ga(X)$ and $L(X)^{\prime\prime}$. With the help of these double covers, we show that for any integer $k\geq 5$, there exist two equienergetic graphs of order $2k$ that are not cospectral.
\end{abstract}
\maketitle
\section{Introduction}
In this paper, we restrict ourselves to finite graphs with no self-loops and multiple edges. We denote the {\em cycle graph, the path graph, the complete graph} and the {\em star graph} on $n$ vertices by $C_n, P_n, K_n$ and $K_{1,n-1}$ respectively. A graph $Y$ is a {\em covering graph} of a graph $X$ if there is a map from the vertex set of $Y$ to the vertex set of $X$ such that
the neighbourhood of a vertex $v$ in $Y$ is mapped bijectively onto the neighbourhood of $f(v)$ in $X$. If each vertex of $X$ has exactly two preimages in $Y$ then we say that $Y$ is a {\em double cover} of $X$. One of the easy ways to construct the double cover of a graph $X$ is to take the {\em Kronecker product} of $X$ by $K_2$ and it is denoted by $X^{\prime\prime}$. The Kronecker product $X_1\times X_2$ of graphs $X_1$ and $X_2$ is a graph such that the vertex set is $V(X_1)\times V(X_2)$, vertices $(x_1,x_2)$ and $(x_1^\prime,x_2^\prime)$ are adjacent if and only if $x_1$ is adjacent to $x_1^\prime$ in $X_1$ and $x_2$ is adjacent to $x_2^\prime$ in $X_2.$ In Section \ref{thm:Sec:Main:3}, we show that $L(X^{\prime\prime})$ is a double cover of $L(X)$. The double cover for a graph $X$ is not unique (see Example \ref{ex:coverK4}). Many researchers used covering of graphs in the construction of Ramanujan graphs (see \cite{minei2018ramanujan}) and in the construction of pairs of cospectral but not isomorphic graphs. Additional information on covering of graphs can be found in \cite{gross1977generating,terras2010zeta}.
\par
\begin{example}\label{ex:coverK4}
In this example, we demonstrate the two non-isomorphic double covers of $K_4$.
\begin{figure}[ht!]
    \centering

\begin{tabular}{ccc}
    \begin{tikzpicture}   
    \vertex (1) at (0,0) [label=below:$a$]{};
    \vertex (2) at (2,0) [label=below:$b$]{};
    \vertex (3) at (2,2) [label=above:$c$]{};
    \vertex (4) at (0,2) [label=above:$d$]{};
     \path[-]
    (1) edge (2)
    (2) edge (3)
    (4) edge (3)
    (1) edge (4)
    (4) edge (2)
    (1) edge (3)
    ;
     \end{tikzpicture}
     & 
     \begin{tikzpicture}
    \vertex (1) at (1,1)[label=left:$b^{\prime\prime}$] {};
    \vertex (2) at (2,2) [label=above:$c^{\prime\prime}$] {} {};
    \vertex (3) at (2,0) [label=below:$d^{\prime\prime}$] {};
    \vertex (4) at (3,2)[label=above:$d^{\prime}$] {};
    \vertex (5) at (3,0) [label=below:$c^{\prime}$] {};
    \vertex (6) at (4,1) [label=right:$b^{\prime}$] {};
    \vertex (7) at (2,1) [label=right:$a^{\prime\prime}$]{};
    \vertex (8) at (3,1) [label=above:$a^{\prime}$]{};
    
    \path[-]
    (1) edge (2)
    (1) edge (3)
    (2) edge (4)
    (3) edge (5)
    (4) edge (6)
    (5) edge (6)
    (7) edge (2)
    (7) edge (3)
    (7) edge (1)
    (8) edge (4)
    (8) edge (6)
    (8) edge (5);
    
\end{tikzpicture}
     & 
      \begin{tikzpicture}   
    \vertex (1) at (0,0) [label=below:$a^{\prime\prime}$]{};
    \vertex (2) at (3,0) [label=below:$c^{\prime}$]{};
    \vertex (3) at (3,3) [label=above:$d^{\prime\prime}$]{};
    \vertex (4) at (0,3) [label=above:$b^{\prime\prime}$]{};
     \vertex (5) at (1,1) [label=below:$d^\prime$]{};
    \vertex (6) at (2,1) [label=below:$b^\prime$]{};
    \vertex (7) at (2,2) [label=above:$a^\prime$]{};
    \vertex (8) at (1,2) [label=above:$c^{\prime\prime}$]{};
     \path[-]
    (1) edge (2)
    (2) edge (3)
    (4) edge (3)
    (1) edge (4)
    (5) edge (6)
    (6) edge (7)
    (7) edge (8)
    (5) edge (8)
    (4) edge (8)
    (7) edge (3)
    (2) edge (6)
    (1) edge (5)
    ;
     \end{tikzpicture}
     \\
\end{tabular}
\caption{}
    \label{fig:my_label}
\end{figure}
\end{example}  

 Let $X=(V,E)$ be a graph with
 $|V(X)|=n, \;|E(X)|=m.$ We orient the edges arbitrarily and label them as $e_1,e_2,\ldots,e_m$ and also
$e_{m+i}=e_i^{-1},\;1\le i\le m,$ where $e_k^{-1}$ denotes the edge $e_k$
with the direction reversed. Then the {\em edge adjacency matrix} of $X$, denoted by $M(X)$ or simply $M$, is defined as 
 $$M_{ij}=\begin{cases}
          1 & \mbox{if $t(e_i)=s(e_j)$\; and\; $s(e_i)\ne t(e_j)$},\\
          0 & \mbox{otherwise.}
                   \end{cases}$$
where $s(e_i)$ and $t(e_i)$ denote the starting and terminal vertex of $e_i$ respectively.
\begin{example}
The process of computation of matrix $M(C_3)$ is given below.
\vglue 2mm
\begin{tabular}{|c|c|c|}
\hline
$X$ && $M$\\
\begin{tikzpicture}
\vertex (1) at (0,0){};
\vertex (2) at (2,0){};
\vertex (3) at (1,1){};
     \path[-]
    (1) edge (2)
    (2) edge (3)
    (1) edge (3)
    ; 
\end{tikzpicture}

&
\tikzset{every picture/.style={line width=0.75pt}} 

\begin{tikzpicture}[x=0.75pt,y=0.75pt,yscale=-0.5,xscale=0.5]

\draw   (100,115) .. controls (100,112.24) and (102.24,110) .. (105,110) .. controls (107.76,110) and (110,112.24) .. (110,115) .. controls (110,117.76) and (107.76,120) .. (105,120) .. controls (102.24,120) and (100,117.76) .. (100,115) -- cycle ;
\draw    (100.48,120.21) -- (41.42,199.18) ;
\draw    (110,119) -- (112.48,122.84) -- (172,203.5) ;
\draw    (44.61,202.3) -- (167.5,208) ;

\draw   (34,202.3) .. controls (34,205.17) and (36.37,207.5) .. (39.3,207.5) .. controls (42.23,207.5) and (44.61,205.17) .. (44.61,202.3) .. controls (44.61,199.43) and (42.23,197.1) .. (39.3,197.1) .. controls (36.37,197.1) and (34,199.43) .. (34,202.3) -- cycle ;
\draw   (167.5,208) .. controls (167.5,210.49) and (169.51,212.5) .. (172,212.5) .. controls (174.49,212.5) and (176.5,210.49) .. (176.5,208) .. controls (176.5,205.51) and (174.49,203.5) .. (172,203.5) .. controls (169.51,203.5) and (167.5,205.51) .. (167.5,208) -- cycle ;
\draw   (126,129.5) -- (124.67,138.67) -- (115.88,135.76) ;
\draw   (151,165.5) -- (141.18,160.84) -- (140.52,171.69) ;
\draw   (63,195.5) -- (75,202.5) -- (63,209.5) ;
\draw   (128.84,211.14) -- (118.63,205.82) -- (128.49,199.88) ;
\draw   (95,142.5) -- (83.8,143.58) -- (83.41,132.33) ;
\draw   (52.61,173.37) -- (62.62,170.31) -- (65,180.5) ;

\draw (128,114) node [anchor=north west][inner sep=0.75pt]   [align=left] {$e_1$};
\draw (154,151) node [anchor=north west][inner sep=0.75pt]   [align=left] {$e_1^{-1}$};
\draw (116,179) node [anchor=north west][inner sep=0.75pt]   [align=left] {$e_2$};
\draw (59,210) node [anchor=north west][inner sep=0.75pt]   [align=left] {$e_2^{-1}$};
\draw (30,145) node [anchor=north west][inner sep=0.75pt]   [align=left] {$e_3$};
\draw (61,92) node [anchor=north west][inner sep=0.75pt]   [align=left] {$e_3^{-1}$};
\end{tikzpicture}
&
\begin{tabular}{|l|lll|lll|}
\hline
&$e_1$&$e_2$&$e_3$&$e_1^{-1}$&$e_2^{-1}$&$e_3^{-1}$\\
\hline
$e_1$&0 & 1 & 0 & 0 & 0 & 0\\
$e_2$&0 & 0 & 1 & 0 & 0 & 0\\
$e_3$&1 & 0 & 0 & 0 & 0 & 0\\
\hline
$e_1^{-1}$&0 & 0 & 0 & 0 & 0 & 1\\
$e_2^{-1}$&0 & 0 & 0 & 1 & 0 & 0\\
$e_3^{-1}$&0 & 0 & 0 & 0 & 1 & 0\\
\hline
\end{tabular}\\
&&\\
\hline
\end{tabular}
\end{example}

It is interesting to see that $M+M^T,$ where $A^T$ denotes the transpose of $A,$ is a symmetric matrix with entries $0$ or $1.$ We call $M+M^T$ {\em symmetric edge adjacency matrix} of $X,$ and the graph whose adjacency matrix is $M+M^T$ is called {\em symmetric edge graph} of $X$, denoted by $\ga(X)$.  We define $\ga^{k}(X)=\ga(\ga^{k-1}(X)),$ where $k\in \mathbb{N}$ with $\ga^0(X)=X$. Later, we will see that $\ga(X)$ is also a double cover of $L(X)$. In Figure \ref{fig:3cover}, for a graph $X$ we have given its line graph and the three non-isomorphic double covers of $L(X)$.

\begin{figure}[ht!]
\begin{subfigure}[h]{0.1\textwidth}
\begin{tikzpicture} [xscale=0.6,yscale=0.9]  
    \vertex (1) at (1,1) [label=below:$3$]{};
    \vertex (2) at (2,1)[label=below:$2$] {};
    \vertex (3) at (1.5,2)[label=right:$1$]{};
    \vertex (6) at (0,1) [label=below:$6$]{};
    \vertex (4) at (1.5,3)[label=left:$5$]{};
    \vertex (5) at (3,1)[label=below:$4$]{};
     \path[-]
    (1) edge (2)
    (2) edge (3)
    (1) edge (3)
    (1) edge (6)
    (2) edge (5)
    (3) edge (4)
     ;
\end{tikzpicture}
\caption{$X$}
\label{fig:X}
\end{subfigure}
\hfill
\begin{subfigure}[ht!]{0.1\textwidth}
\begin{tikzpicture}
    \vertex (1) at (0,0){};
    \vertex (2) at (2,0){};
    \vertex (3) at (1,1){};
    \vertex (4) at (0.5,0.5){};
    \vertex (5) at (1.5,0.5){};
    \vertex (6) at (1,0){};
\path[-]
(1) edge (4)
(3) edge (4)
(6) edge (5)
(3) edge (5)
(1) edge (6)
(6) edge (2)
(4) edge (6)
(2) edge (5)
(5) edge (4)
;   
\end{tikzpicture}
\caption{$L(X)$}
\label{fig:L(X)}
\end{subfigure}
\hfill
\begin{subfigure}[ht!]{0.2\textwidth}
    \begin{tikzpicture}[xscale=0.5,yscale=0.5]
    \vertex (1) at (2,3) {};
    \vertex (2) at (3,1) {};
    \vertex (3) at (1,1) {};
    \vertex (4) at (1,2) {};
    \vertex (5) at (3,2) {};
    \vertex (6) at (2,0) {};
    \vertex (7) at (3.75,3.75){};
    \vertex (8) at (4.5,1.5) {};
    \vertex (9) at (4.5,-1) {};
    \vertex (10) at (0.5,-1){};
    \vertex (12) at (0,3.75) {};
    \vertex (11) at (-1,1.5) {};
    \path[-]
    (1) edge (3)
    (3) edge (2)
    (2) edge (1)
    (6) edge (5)
    (4) edge (5)
    (4) edge (6)
    (1) edge (7)
    (5) edge (7)
    (8) edge (5)
    (2) edge (8)
    (2) edge (9)
    (9) edge (6)
    (3) edge (10)
    (10) edge (6)
    (4) edge (12)
    (1) edge (12)
    (4) edge (11)
    (3) edge (11)
    ;
\end{tikzpicture}
        \caption{$\ga(X)$}
\end{subfigure}
\hfill
\begin{subfigure}[ht!]{0.1\textwidth}
\begin{tikzpicture}[xscale=0.6,yscale=0.5]
    \vertex (1) at (1,1) {};
    \vertex (2) at (2,2) {};
    \vertex (3) at (2,0) {};
    \vertex (4) at (3,2) {};
    \vertex (5) at (3,0) {};
    \vertex (6) at (4,1) {};
    \vertex (7) at (1,3) {};
    \vertex (8) at (2.5,4){};
    \vertex (9) at (4,3) {};
    \vertex (10) at (4,-1) {};
    \vertex (11) at (2.5,-2) {};
    \vertex (12) at (1,-1) {};
    
    \path[-]
    (1) edge (2)
    (1) edge (3)
    (2) edge (4)
    (3) edge (5)
    (4) edge (6)
    (5) edge (6)
    (1) edge (7)
    (2) edge (7)
    (8) edge (4)
    (8) edge (2)
    (9) edge (4)
    (9) edge (6)
    (10) edge (5)
    (10) edge (6)
    (11) edge (3)
    (11) edge (5)
    (12) edge (1)
    (12) edge (3)
    ;
\end{tikzpicture}
\caption{$L(X^{\prime\prime})$}
    \end{subfigure}
    \hfill
    \begin{subfigure}[ht!]{0.1\textwidth}
\begin{tikzpicture}[xscale=0.5,yscale=0.9]
    \vertex (1) at (0,0) {};
    \vertex (2) at (1,0) {};
    \vertex (3) at (2,0) {};
    \vertex (4) at (0,1) {};
    \vertex (5) at (1,1) {};
    \vertex (6) at (2,1) {};
    \vertex (7) at (0,2) {};
    \vertex (8) at (1,2) {};
    \vertex (9) at (2,2) {};
    \vertex (10) at (0,3) {};
    \vertex (11) at (1,3) {};
    \vertex (12) at (2,3) {};
    
    \path[-]
    (1) edge (4)
    (1) edge (5)
    (2) edge (4)
    (2) edge (6)
    (3) edge (6)
    (3) edge (5)
    (7) edge (4)
    (8) edge (4)
    (5) edge (7)
    (5) edge (9)
    (8) edge (6)
    (9) edge (6)
    (7) edge (10)
    (7) edge (11)
    (8) edge (10)
    (8) edge (12)
    (9) edge (11)
    (9) edge (12)
    ;
\end{tikzpicture}
\caption{$L(X)^{\prime\prime}$}
    \end{subfigure}
    \caption{}
    \label{fig:3cover}
\end{figure}

In the literature, a lot of work has been done on the properties of $L(X)$ in relation to $X$ (see Chapter 8 of \cite{Harary1969}). 
In Section~\ref{sec:ga}, we study various properties of $\ga(X)$ with respect to $X$. We provide a decomposition of $\ga(X)$ in terms of crown graphs.
With these three double covers of $L(X)$ in hand which are $L(X)^{\prime\prime},L(X^{\prime\prime})$ and $\ga(X)$, we will study the relation amongst them in Section \ref{sec:DC}. In Theorem \ref{thm:Sec:Main}, we characterize all graphs $X$ so that $\ga(X)=L(X^{\prime\prime}),\ga(X)=L(X)^{\prime\prime}$ and $L(X)^{\prime\prime}=L(X^{\prime\prime})$. In the rest of this section, we will discuss why the matrix $M$ is important for the Ihara zeta function of a graph, the properties of the matrix $M$, and the symmetric edge graphs.

A path $P=e_1e_2\cdots e_t,$ where $e_i$ is an oriented edge, is said to {\em backtrack} if $e_{k+1}=e_k^{-1}$ for some $k\in \{1,2,3,\ldots,t-1\},$ {\it i.e.} it crosses the same edge twice in a row. A path $P$ is said to have a {\em tail} if $e_t=e_1^{-1},$ {\it i.e.} the last edge of $P$ is the reverse of the first edge. A closed path $C=e_1e_2\cdots e_t$ is said to be {\em prime} or {\em  
primitive} if it has no backtrack or tail and $C\ne D^f$ for some closed path $D$ and $f>1.$  The {\em Ihara zeta function} of a graph $X$ is defined to be 
$$\ze_{X}(u)=\prod\limits_{[C]}\left(1-u^{\ell(C)}\right)^{-1},$$ where the
product is over the primes $[C]$ of $X$ and $\ell(C)$ is the length of cycle
$C$. The {\em fundamental group} $\pi_1(X,v)$ of a connected graph $X$ is the free group consisting of all closed walks starting and ending at the vertex $v$ together with the operation which concatenates walks. The rank $r$ of the $\pi_1(X,v)$ is the number of elements in a minimal generating set of $\pi_1(X,v)$ which is also the number of edges left out of a spanning tree of $X.$
The computation of Ihara zeta function using the definition is difficult except for the cycle graph. The following two results by Bass~\cite{bass1992ihara} and Hashimoto~\cite{MR1040609} simplified the evaluation of the Ihara zeta function for graphs that have a minimal degree of at least 2. 

\begin{theorem}\cite{MR1040609}\label{thm:AQ}
  Let $A(X)$ or $A$ be the adjacency matrix of $X$ and $Q(X)$ or $Q$ be the diagonal matrix with $j^{th}$ diagonal entry $q_j$ such that $q_j + 1$ is the degree of the $j^{th}$ vertex of X. Suppose that r is the rank of the fundamental group of $X$; $r-1=|E|-|V|$. Then
 \begin{equation}\label{eqn:1}
\ze_{X}(u)^{-1}=(1-u^2)^{r-1}det(I-Au+Qu^2).
\end{equation}
 \end{theorem}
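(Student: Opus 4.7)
The plan is to establish the formula in two steps. First, I would show
\begin{equation*}
\zeta_X(u)^{-1} = \det(I_{2m} - Mu),
\end{equation*}
where $M$ is the $2m \times 2m$ edge adjacency matrix defined above. Second, I would prove the determinantal identity
\begin{equation*}
\det(I_{2m} - Mu) = (1-u^2)^{r-1}\det(I_n - Au + Qu^2),
\end{equation*}
which, combined with the first step and the equality $r - 1 = m - n$, gives the stated formula.

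For the first step, the key observation is that $\operatorname{tr}(M^k)$ equals the number $N_k$ of closed non-backtracking tailless walks of length $k$ rooted at an oriented edge: a product $M_{i, l_1} M_{l_1, l_2} \cdots M_{l_{k-1}, i}$ contributes one exactly when each consecutive pair of edges is a non-backtracking transition and when the wrap-around step $l_{k-1} \mapsto i$ is also non-backtracking, which is precisely the closedness-plus-taillessness condition. Grouping such walks by their prime equivalence class yields $\operatorname{tr}(M^k) = \sum_{\ell(C) \mid k} \ell(C)$, where the sum is over primes $[C]$. The derivative identities
\begin{equation*}
-u\frac{d}{du}\log \det(I_{2m} - uM) = \sum_{k\ge 1}\operatorname{tr}(M^k)\,u^k \quad \text{and} \quad u\frac{d}{du}\log \zeta_X(u) = \sum_{k\ge 1} N_k\,u^k
\end{equation*}
(the latter obtained by expanding the Euler product) then match term by term, and integrating with the common value $0$ at $u = 0$ gives the claimed equality of formal power series and hence of rational functions.

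For the second step, I would introduce the $n \times 2m$ start and terminal incidence matrices $S$ and $T$, together with the involution $J$ on the $2m$ oriented edges that swaps $e_i$ with $e_i^{-1}$. The elementary identities $ST^T = A$, $SS^T = TT^T = Q + I$, $T = SJ$, and $T^T S = M + J$ follow directly from the definitions. Bass's identity then follows by computing the determinant of a suitably chosen $(n+2m)\times(n+2m)$ block matrix (whose blocks involve $A$, $Q$, $S$, $T$, $M$, $J$ and $u$) in two ways by block-triangularisation: one reduction produces $\det(I_n - Au + Qu^2)$, while the other produces $\det(I_{2m} - Mu)$, with a factor of $(1-u^2)^{m-n} = (1-u^2)^{r-1}$ arising from the size mismatch between the $n$- and $2m$-sides.

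I expect the main obstacle to be the block-matrix manipulation in the second step. Each of the identities relating $S$, $T$, $A$, $Q$, $M$, and $J$ is elementary, but arranging them inside a single $(n+2m)\times(n+2m)$ block matrix so that elementary row and column operations produce exactly the two intended factorisations is the delicate core of Bass's original argument. By contrast, the first step is largely bookkeeping once the walk interpretation of $M^k$ is in hand, the only subtlety being that $\operatorname{tr}(M^k)$ automatically restricts attention to tailless closed walks, which is precisely what matches the Euler product side.
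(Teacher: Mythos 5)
The paper offers no proof of Theorem \ref{thm:AQ}: it is quoted verbatim from Hashimoto \cite{MR1040609} and Bass \cite{bass1992ihara}, so there is no in-paper argument to compare yours against. Your two-step outline is the standard proof from that literature. Step 1 is precisely the paper's Theorem \ref{Thm:IHARAM}, $\ze_X(u)^{-1}=\det(I_{2m}-Mu)$, and your derivation of it is sound: the wrap-around factor $M_{l_{k-1},i}$ does enforce taillessness (for a simple graph, $t(e_{l_{k-1}})=s(e_i)$ together with $s(e_{l_{k-1}})=t(e_i)$ forces $e_{l_{k-1}}=e_i^{-1}$), each prime class of length $d\mid k$ contributes exactly $d$ rooted tailless closed paths of length $k$, and the two logarithmic-derivative expansions then match coefficientwise. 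Step 2 correctly isolates the needed ingredients $ST^T=A$, $SS^T=TT^T=Q+I$, $T=SJ$, $T^TS=M+J$, and the target is the identity $(1-u^2)^{n}\det(I_{2m}-Mu)=\det(I_n-Au+Qu^2)\det(I_{2m}+Ju)$, which together with $\det(I_{2m}+Ju)=(1-u^2)^{m}$ yields the exponent $m-n=r-1$. Two small caveats: the $(1-u^2)$ powers do not come from a ``size mismatch'' per se but from the explicit factors $\det(I_n(1-u^2))$ and $\det(I_{2m}+Ju)$ appearing in the two triangularisations, and the block matrix realising those two triangularisations is never written down, which you rightly identify as the delicate core. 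As it stands this is a correct and well-organised outline rather than a complete proof; supplying the explicit $(n+2m)\times(n+2m)$ factorisation (as in \cite{bass1992ihara} or \cite{terras2010zeta}) would close it.
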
 
  The main purpose of introducing the matrix $M$ can be seen in the following result. 
\begin{theorem} \cite{bass1992ihara}\label{Thm:IHARAM}
Let $M$ be the edge adjacency matrix of a graph $X.$ Then
 $$\ze_{X}(u)^{-1}=det(I-Mu).$$
 \end{theorem}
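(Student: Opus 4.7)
The plan is to work logarithmically: expand
\[
-\log\det(I-Mu)\;=\;\operatorname{tr}\log(I-Mu)^{-1}\;=\;\sum_{k\ge 1}\frac{\operatorname{tr}(M^k)}{k}\,u^k
\]
as a formal power series and match the right-hand side with
\[
\log\zeta_{X}(u)\;=\;-\sum_{[C]}\log\bigl(1-u^{\ell(C)}\bigr)\;=\;\sum_{[C]}\sum_{f\ge 1}\frac{u^{f\ell(C)}}{f}.
\]
Everything reduces to giving $\operatorname{tr}(M^k)$ a combinatorial meaning in terms of closed walks.

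First I would interpret $(M^k)_{ii}$. By the definition of $M$, the entry $M_{ij}$ equals $1$ iff $t(e_i)=s(e_j)$ and $e_j\ne e_i^{-1}$, i.e.\ $e_i,e_j$ form a non-backtracking consecutive pair. Hence $(M^k)_{ii}$ counts sequences $(e_{i_0}=e_i,e_{i_1},\dots,e_{i_{k-1}})$ that form a closed walk of length $k$ without backtracking, and where the closing factor $M_{i_{k-1},i_0}=1$ imposes $s(e_{i_{k-1}})\ne t(e_{i_0})$; combined with the already-forced $t(e_{i_{k-1}})=s(e_{i_0})$, this is exactly the no-tail condition $e_{i_{k-1}}\ne e_{i_0}^{-1}$. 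Summing over $i$, $\operatorname{tr}(M^k)$ counts closed non-backtracking walks of length $k$ without tail, with a designated starting oriented edge.

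Next I would group such walks by the prime they are powers of. Every non-backtracking, tail-free closed walk of length $k$ is a unique cyclic rotation of $C^f$ for some prime $[C]$ of length $\ell$ with $k=f\ell$; since rotating $C^f$ by $\ell$ edges reproduces the walk, each prime $[C]$ with $\ell\mid k$ contributes precisely $\ell$ starting-edge choices. Writing $\pi(\ell)$ for the number of primes of length $\ell$, this gives $\operatorname{tr}(M^k)=\sum_{\ell\mid k}\ell\,\pi(\ell)$. Substituting and swapping the order of summation,
\[
-\log\det(I-Mu)\;=\;\sum_{\ell\ge 1}\pi(\ell)\sum_{f\ge 1}\frac{u^{f\ell}}{f}\;=\;-\sum_{[C]}\log\bigl(1-u^{\ell(C)}\bigr)\;=\;\log\zeta_X(u),
\]
and exponentiating yields $\det(I-Mu)=\zeta_{X}(u)^{-1}$.

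The main obstacle is the combinatorial identification, and it has two parts that must be checked rigorously: that the closing factor in the cyclic product $(M^k)_{ii}$ encodes exactly the no-tail condition (neither more nor less than $e_{i_{k-1}}\ne e_{i_0}^{-1}$), and that the enumeration of starting-edge-specified walks of length $k$ that are rotations of $C^f$ is an $\ell$-to-one cover of the equivalence class $[C]$ (so that no prime is over- or under-counted, and the primitivity of $C$ is essential here to rule out extra symmetries). Once these two bookkeeping points are settled, the remaining manipulation is purely formal power-series algebra.
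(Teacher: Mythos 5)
The paper does not actually prove this statement: it is quoted from Bass \cite{bass1992ihara} (see also Terras \cite{terras2010zeta}), so there is no in-paper argument to compare against. Your proposal is the standard combinatorial proof of the edge-matrix determinant formula, and it is correct. The key identity $\operatorname{tr}(M^k)=N_k$ is precisely Property (7) of $M$ listed in the paper's introduction, and the two bookkeeping points you isolate do hold: since the paper restricts to simple graphs, $t(e_{i_{k-1}})=s(e_{i_0})$ together with $s(e_{i_{k-1}})=t(e_{i_0})$ forces $e_{i_0}=e_{i_{k-1}}^{-1}$, so the closing factor encodes exactly the no-tail condition; and a primitive closed walk of length $\ell$ has exactly $\ell$ distinct cyclic rotations, so each prime class of length $\ell$ contributes exactly $\ell$ start-designated walks of each length $f\ell$. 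One small point worth making explicit in a full write-up is the converse direction of your grouping step: every backtrackless, tailless closed walk of length $k$ is a power $C^f$ of a primitive walk $C$ that is itself backtrackless and tailless (the junctions of $C^f$ being non-backtracking is equivalent to $C$ having no tail), which is what makes the decomposition by primes exhaustive as well as injective. With that noted, the remaining manipulations are the formal power-series algebra you describe, and the argument is complete.
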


 Now we will state a few properties of matrix $M$.  Many of these have been discussed in the thesis of Horton~\cite{horton2006ihara} and one can also find them in the book by Terras~\cite{terras2010zeta}. 
 \begin{equation}\label{eqn :2}
M=\left[\begin{array}{c|c}
  \A & \B\\
  \hline
  \C & \D
\end{array}
\right],
\end{equation}
 where $\A, \B,\C,\D$ are $m\times m$ matrices with the following properties:
 \begin{multicols}{2}
 \begin{enumerate}
     \item $\B=\B^T$, $\C=\C^T.$
     \item $\D=\A^T.$
     \item The diagonals of $\A,\B,\C$ and $\D$ are zeros.
     \item If $J=\begin{bmatrix}
0 & \I_{m}\\
\I_{m} & 0
\end{bmatrix},$ where $\I_m$ denotes the identity matrix of order $m$ then $M^T=JMJ.$
\item The $i^{th}$ row sum of $M$ is equal to $d_{t(e_i)}-1,$ where $d_{v}$ denotes the degree of vertex $v.$

\item \label{property:M:6}
The sum of the blocks of $M,$ $\A+\B+\C+\D$ is the adjacency matrix of  $L(X).$ Here one can note that Hadamard product of any two matrices from $\{\A,\B,\C,\D\}$ is the zero matrix.

\item \label{property:M:7}
Let $M$ be the edge adjacency matrix of the graph $X.$ Then $Tr(M^k)=N_k,$ where $N_k$ is the number of cycles of length $k$ without backtracks and tails.
\end{enumerate}
\end{multicols}

Now we provide two examples of $\ga(X)$, from where one can note that $\ga$ function does not preserve connectivity and $K_{1,3}$ is a tree but $\ga(K_{1,3})$ is a cycle graph. After that, we shall state Theorem \ref{thm:unionspectra}, which is essential for further discussion.

\begin{example}\label{Ex:cycle}
\begin{enumerate}

\item \label{Ex:cycle:1}
If $X=C_n,$ then $\ga(X)=2C_n$ and $\ga^{k}(X)=2^{k}C_n$.
\begin{figure}[ht!]
\centering
\begin{tabular}{ccc}
\begin{tikzpicture}
    \vertex (1) at (2,2) {};
    \vertex (2) at (3,3) {};
    \vertex (3) at (4,2) {};
    \path[-]
    (1) edge (2)
    (2) edge (3)
    (3) edge (1);
    
\end{tikzpicture}
&
\begin{tikzpicture}
 \vertex (4) at (0,0) {};
    \vertex (5) at (2,0) {};
    \vertex (6) at (1,1) {};
    \vertex (7) at (0,-1.5){};
    \vertex (8) at (2,-1.5){};
    \vertex (9) at (1,-0.5){};
    \path[-]
    (4) edge (5)
    (5) edge (6)
    (6) edge (4)
    (7) edge (8)
    (8) edge (9)
    (9) edge (7);
\end{tikzpicture}

&
\begin{tabular}{|l|lll|lll|}
\hline
&$e_1$&$e_2$&$e_3$&$e_1^{-1}$&$e_2^{-1}$&$e_3^{-1}$\\
\hline
$e_1$&0 & 1 & 1 & 0 & 0 & 0\\
$e_2$&1 & 0 & 1 & 0 & 0 & 0\\
$e_3$&1 & 1 & 0 & 0 & 0 & 0\\

\hline
$e_1^{-1}$&0 & 0 & 0 & 0 & 1 & 1\\
$e_2^{-1}$&0 & 0 & 0 & 1 & 0 & 1\\
$e_3^{-1}$&0 & 0 & 0 & 1 & 1 & 0\\
\hline
\end{tabular}\\
\\
\end{tabular}
\caption{$C_3,\ga(C_3)$ and $A(\ga(C_3)).$}
\label{fig: Cycle}
\end{figure}

\item \label{Ex:cycle:2} If $X=K_{1,3},$ then $\ga(X)=C_6$ and $\ga^{k}(X)=2^{k-1}C_6$.
\begin{figure}[ht!]
\begin{center}  
\begin{tabular}{ccc}
\begin{tikzpicture}[xscale=0.9,yscale=0.9]
    \vertex (1) at (3,3) {};
    \vertex (2) at (4,3) {};
    \vertex (3) at (2,3) {};
    \vertex (4) at (3,4) {};
     \path[-]
    (1) edge (2)
    (1) edge (3)
    (1) edge (4)
    ;
\end{tikzpicture}&

\begin{tikzpicture}[xscale=0.9,yscale=0.5]
    \vertex (1) at (1,1) {};
    \vertex (2) at (2,2) {};
    \vertex (3) at (2,0) {};
    \vertex (4) at (3,2) {};
    \vertex (5) at (3,0) {};
    \vertex (6) at (4,1) {};
    ;
    \path[-]
    (1) edge (2)
    (1) edge (3)
    (2) edge (4)
    (3) edge (5)
    (4) edge (6)
    (5) edge (6);
\end{tikzpicture}
&
$\left[\begin{array}{ccc|ccc}
0 & 1 & 0 & 0 & 0 & 1 \\
1 & 0 & 1 & 0 & 0 & 0 \\
0 & 1 & 0 & 1 & 0 & 0 \\
\hline
0 & 0 & 1 & 0 & 1 & 0 \\
0 & 0 & 0 & 1 & 0 & 1 \\
1 & 0 & 0 & 0 & 1 & 0
\end{array}
\right].$
\\
\end{tabular}
\end{center}
\caption{$K_{1,3},\ga(K_{1,3})$ and $A(\ga(K_{1,3})).$}
    \label{fig:K_{1,3}}
    \end{figure}
\end{enumerate}
\end{example}
    
 \begin{theorem}\cite{davis2013circulant}\label{thm:unionspectra}
 Let $H=\begin{bmatrix}
A^\prime & B^\prime\\
B^\prime & A^\prime
\end{bmatrix}$ be a symmetric $2\times 2$ block matrix, where $A^\prime$ and $B^\prime$ are square matrices of same order. Then the spectrum of $H$ is the union of the spectra of $A^\prime+B^\prime$
and $A^\prime-B^\prime$.
\end{theorem}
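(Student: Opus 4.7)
The plan is to block-diagonalize $H$ by an orthogonal similarity that decouples the two $n\times n$ diagonal blocks, where $n$ denotes the common order of $A'$ and $B'$. Observe first that since $H$ is symmetric, both $A'$ and $B'$ must be symmetric as well (from $H^T=H$ one reads off $A'=(A')^T$ and $B'=(B')^T$), so the diagonal blocks I obtain will be legitimate symmetric matrices whose spectra are well defined.

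The key idea is to introduce the orthogonal matrix
\[
P \;=\; \tfrac{1}{\sqrt{2}}\begin{bmatrix} I_n & I_n \\ I_n & -I_n \end{bmatrix},
\]
for which $P^T P = I_{2n}$ (a one-line block check). A direct block computation then yields
\[
P^T H P \;=\; \begin{bmatrix} A'+B' & 0 \\ 0 & A'-B' \end{bmatrix};
\]
the diagonal blocks arise as $A'\pm B'$ from adding/subtracting the two rows of $HP$, while the off-diagonal blocks cancel thanks to the sign pattern in the second column of $P$. Since orthogonal similarity preserves the spectrum, and the spectrum of a block-diagonal matrix is the multiset union of the spectra of its diagonal blocks, the theorem follows at once.

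There is no serious obstacle; the only care needed is to carry out the block arithmetic correctly and to treat the spectra as multisets so that the eigenvalue multiplicities add up properly. A more elementary alternative avoids the change of basis altogether: if $(A'+B')v=\lambda v$ then $\bigl[\begin{smallmatrix}v\\v\end{smallmatrix}\bigr]$ is an eigenvector of $H$ with eigenvalue $\lambda$, while if $(A'-B')w=\mu w$ then $\bigl[\begin{smallmatrix}w\\-w\end{smallmatrix}\bigr]$ is an eigenvector of $H$ with eigenvalue $\mu$. Because $A'\pm B'$ are real symmetric they admit orthonormal eigenbases, and the $2n$ lifted vectors are manifestly linearly independent, so they account for the full spectrum of $H$. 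I would choose the orthogonal-similarity route, since it makes the block-diagonalization itself the content of the proof rather than relegating it to an eigenvector count.
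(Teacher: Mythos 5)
Your proof is correct, and it is essentially the argument behind this result: the paper states the theorem as a citation to \cite{davis2013circulant} without reproving it, but the very conjugation you use, by $P=\frac{1}{\sqrt{2}}\begin{bmatrix} I_n & I_n\\ I_n & -I_n\end{bmatrix}$, is exactly what the paper itself deploys (in unnormalized form, with $P^{-1}$ in place of $P^{T}$) in the proof of Corollary \ref{cor:zeta}. The only remark worth making is that the symmetry of $H$ is not actually needed: the similarity $P^{-1}HP=\begin{bmatrix} A'+B' & 0\\ 0 & A'-B'\end{bmatrix}$ and hence the factorization of the characteristic polynomial hold for arbitrary square $A'$ and $B'$ of the same order, so your aside about needing $A'$ and $B'$ symmetric for their spectra to be ``well defined'' is superfluous rather than wrong.
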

  
From Equation~\ref{eqn :2}, we have 
\begin{equation}\label{eqn:M+M^t}
  M+M^T=\left[\begin{array}{c|c}
  \A+\D & \B+\C\\
  \hline
  \B+\C & \A+\D
\end{array}
\right].
\end{equation} From Property \ref{property:M:6} of $M$, we see that the row sum of Equation \ref{eqn:M+M^t} is equal to $A(L(X))$, note that $\ga(X)$ is the double cover of $L(X)$. By Theorem \ref{thm:unionspectra}, we can see that the spectrum of $A(L(X))$ is contained in the spectrum of $A(\ga(X))$. The following are a few immediate observations of the graph $\ga(X)$. 
\begin{enumerate}
    \item The number of vertices of $\ga(X)$ is twice the number of edges of $X.$
    \item We have, $$Tr\left((M+M^T)^2\right) = 2|E(\ga(X))|=2e^TMe,$$
 where $e$ denotes the column vector with all entries one
  and $J(M+M^T)=(M+M^T)J.$
  
\item Note that
\begin{equation}\label{eqn:edge}
|E(\ga(X))| = 2|E(L(X))| = \sum\limits_{i=1}^{|V(X)|}d_i^2 - 2|E(X)|.
\end{equation}

\item It is easy to see that if $X$ is Eulerian, then $\ga(X)$ is Eulerian provided $\ga(X)$ is connected which follows from the fact that if $X$ is Eulerian then $L(X)$ is Eulerian (see Harary~\cite{Harary1969}). But if $\ga(X)$ is Eulerian, then $X$ need not be Eulerian which is clear from Part \ref{Ex:cycle:2} of Example \ref{Ex:cycle}.

\item It is well known that if $X$ is regular, then $L(X)$ is regular. This shows that the map $\ga$ maps regular graphs to regular graphs. Conversely, if $\ga(X)$ is regular, then $X$ is either a
regular graph or a semi-regular bipartite graph. It can be seen from Lemma $6.2$ in \cite{ray1967characterization}.
\end{enumerate}

 For further information on the matrix $M$ and the Ihara zeta function, one can refer to \cite{terras2010zeta}. For other results and proofs related to graph theory, we refer to~\cite{Harary1969,biggs1993algebraic}. We recall once again that $L(X)$ and $X^{\prime\prime}$ denote the line graph and Kronecker double cover of $X$, respectively.

\section{Properties of $\ga(X)$}\label{sec:ga}
 We begin this section by stating the famous Whitney theorem and then we present the analogous result for the $\ga$ function.
 
\begin{theorem}\label{Thm:Whitney}\cite{whitney1992congruent}
Let $X$ and $Y$ be connected graphs with isomorphic line graphs. Then $X$ and $Y$ are isomorphic, unless one is $K_3$ and the other is $K_{1,3}$.
\end{theorem}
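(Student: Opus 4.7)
The plan is to reconstruct $X$ from $L(X)$ using the \emph{Krausz partition}: for each vertex $v\in V(X)$, the set $K_v$ of edges of $X$ incident to $v$ forms a clique in $L(X)$; the family $\{K_v\}_{v\in V(X)}$ covers every edge of $L(X)$ exactly once, and each vertex of $L(X)$ lies in exactly two of these cliques (one per endpoint of the corresponding edge of $X$). From any such clique-decomposition one can rebuild a graph by declaring the cliques to be vertices and joining two cliques by an edge for each vertex of $L(X)$ they share. It therefore suffices to produce a Krausz decomposition of $L(X)$ intrinsically from its adjacency and to verify that any two such decompositions yield \emph{isomorphic} reconstructions, with the $K_3$/$K_{1,3}$ ambiguity as the only exception.

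The local ingredient is that if $w\in V(L(X))$ corresponds to an edge $e=uv$ of $X$, then the neighborhood of $w$ in $L(X)$ splits as the disjoint union $(K_u\setminus\{w\})\sqcup (K_v\setminus\{w\})$ of two cliques with no edges between them, since $X$ has no multiple edges. This shows the Krausz cliques can be detected locally from neighborhoods in $L(X)$ alone, which produces a candidate decomposition; bootstrapping this across all vertices gives the sought Krausz family.

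The main step is to show that any valid Krausz decomposition reconstructs $X$ up to isomorphism, outside the exception. The only subtlety lies with triangles: a triangle of $L(X)$ may be a \emph{star triangle} (three edges of $X$ meeting at a common vertex) or a \emph{cycle triangle} (the three edges of a $K_3\subset X$), and interchanging the two in a decomposition locally swaps $K_3$ and $K_{1,3}$. I would show that whenever $L(X)\not\cong K_3$ every triangle admits an intrinsic classification via the structure of vertices outside it together with the cliques attached at its three corners, so the reconstructed graph does not depend on the chosen decomposition up to isomorphism.

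The main obstacle is organizing this case analysis to cover pathological configurations, particularly low-degree vertices of $X$ whose $K_v$ are singletons or edges and may sit inside larger cliques of $L(X)$, and verifying that the only globally ambiguous case is $L(X)=K_3$, for which both $X=K_3$ and $X=K_{1,3}$ are legitimate preimages. Once this is settled, the reconstruction yields a graph isomorphic to $X$, and any isomorphism $L(X)\cong L(Y)$ forces $X\cong Y$ (with the stated exception) by applying the reconstruction on both sides and composing with the chosen isomorphism.
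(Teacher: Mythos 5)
The paper offers no proof of this statement: it is Whitney's classical theorem, quoted verbatim with a citation to \cite{whitney1992congruent}, so there is no in-paper argument to measure yours against. Your sketch follows the standard modern route --- reconstructing $X$ from a Krausz clique partition of $L(X)$ (cf.\ Theorem \ref{thm:char:line}) and showing the reconstruction is essentially unique --- which is a legitimate strategy, but as written it contains one false step and leaves the decisive step unproven.

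First, the ``local ingredient'' is wrong as stated. If $w\in V(L(X))$ corresponds to $e=uv$ and $e$ lies on a triangle $uvx$ of $X$, then $ux\in K_u\setminus\{w\}$ and $vx\in K_v\setminus\{w\}$ are adjacent in $L(X)$, so the neighbourhood of $w$ is \emph{not} a disjoint union of two cliques with no edges between them. The absence of multiple edges only guarantees that the two sets are disjoint, not that no edges cross between them. This is not cosmetic: the failure of the split at triangles is exactly the source of the star-triangle versus cycle-triangle ambiguity you invoke two sentences later, so the ``candidate decomposition'' cannot be read off neighbourhoods in the way you claim. Second, the step carrying essentially all of the content --- an intrinsic classification of each triangle of $L(X)$ as a star triangle or a cycle triangle, valid except when $L(X)\cong K_3$, together with the conclusion that any two admissible Krausz decompositions reconstruct isomorphic graphs --- is only announced (``I would show\dots''), not performed. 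Note that uniqueness of the decomposition itself is too much to hope for: for $X$ the triangle with a pendant edge, $L(X)$ is $K_4$ minus an edge and admits two genuinely different Krausz decompositions (one reading the triangle as a star, one as a cycle), which happen to rebuild isomorphic graphs; establishing that this always happens outside the $K_3/K_{1,3}$ case requires the odd/even triangle analysis and a check of the small exceptional configurations. Without that, the proposal is a plan for a proof rather than a proof.
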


\begin{theorem} \label{thm:iso}
Let $X$ and $Y$ be connected graphs. Then $\ga(X)$ is isomorphic to $\ga(Y)$ if and only if $X$ is isomorphic to $Y.$
\end{theorem}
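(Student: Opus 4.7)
The forward direction is immediate: any graph isomorphism $X\to Y$ induces a bijection between the oriented edges of $X$ and of $Y$ that preserves the defining relation of $M$, hence of $M+M^T$, and so yields an isomorphism $\ga(X)\to\ga(Y)$.

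For the converse, my plan is to reconstruct $X$ from $\ga(X)$ up to isomorphism, using the canonical decomposition of $\ga(X)$ into crown subgraphs promised in Section~\ref{sec:ga}. The structural starting point is the following observation: for each vertex $v\in V(X)$ of degree $d_v\ge 2$, the $2d_v$ arcs incident with $v$ induce in $\ga(X)$ a copy of the crown $K_{d_v,d_v}$ minus a perfect matching (incoming arcs on one side, outgoing arcs on the other, the matching pairing each arc with its reverse). A direct check from the definition of $M+M^T$ shows that every edge of $\ga(X)$ belongs to exactly one such crown, namely the one at the unique vertex of $X$ shared by the two endpoint arcs in head-to-tail fashion; moreover the crowns at $u$ and $v$ share precisely the two arcs of $uv$ when $uv\in E(X)$ and are vertex-disjoint otherwise. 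Given this, I would reconstruct $X$ by taking its vertex set to be the crowns of $\ga(X)$ and declaring two crowns adjacent exactly when they share two vertices, handling leaves of $X$ by the auxiliary remark that a vertex of degree $1$ contributes only two isolated arcs that are detected as a pair of vertices lying in a unique crown (the one at their non-leaf neighbour).

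The main obstacle is to show that the crown decomposition is an intrinsic feature of $\ga(X)$ as an abstract graph, so that any isomorphism $\ga(X)\to\ga(Y)$ must send crowns to crowns. Since crown graphs are themselves highly symmetric (for instance the crown at a degree-$3$ vertex is just $C_6$), a naive ``find all $K_{n,n}$ minus a matching'' recognition may pick up spurious subgraphs, and a careful structural characterization is required; I anticipate proving it by showing that the crowns of the decomposition are precisely the maximal pairwise edge-disjoint occurrences of $K_{n,n}$ minus a perfect matching in $\ga(X)$, with a separate case analysis for the low-degree crowns $S_1^0=2K_1$ and $S_2^0=2K_2$. As a fallback, one can instead recover $L(X)$ first, using the fact that pairs of reverse arcs $\{(u,v),(v,u)\}$ are distinguished as the pairs of vertices of $\ga(X)$ lying in two common crowns, and then invoke Whitney's theorem (Theorem~\ref{Thm:Whitney}); the Whitney exception is no obstacle here, since by Example~\ref{Ex:cycle} one has $\ga(K_3)=2C_3\not\cong C_6=\ga(K_{1,3})$.
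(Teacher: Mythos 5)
Your overall strategy---recover $L(X)$ (or $X$ itself) from the abstract graph $\ga(X)$ and finish with Whitney's theorem, dismissing the $K_3$/$K_{1,3}$ exception because $\ga(K_3)=2C_3\not\cong C_6=\ga(K_{1,3})$---is exactly the shape of the paper's proof, which deduces $L(X)\cong L(Y)$ from $\ga(X)\cong\ga(Y)$ via Property \ref{property:M:6} of $M$ and then invokes Theorem \ref{Thm:Whitney} together with Example \ref{Ex:cycle}. The structural facts you list about the crowns $St(v)$ (they partition $E(\ga(X))$, the crowns at $u$ and $v$ meet in exactly the two arcs of $uv$, and so on) are correct and are the content of Theorem \ref{thm:char}.

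The gap is that the one step you yourself flag as ``the main obstacle'' is never carried out, and it is the entire mathematical content of the converse. You need to know that an abstract isomorphism $\ga(X)\to\ga(Y)$ carries the crown decomposition of $\ga(X)$ to that of $\ga(Y)$ (equivalently, that $L(X)$ is determined by the isomorphism type of $\ga(X)$); a double cover does not in general determine its base graph, so this cannot be waved through. Your proposed fix---that the crowns are precisely the maximal pairwise edge-disjoint occurrences of $K_{n,n}$ minus a perfect matching---is a property of a \emph{family} of subgraphs rather than of an individual subgraph, so it is not even clear that it singles out a unique decomposition, and you give no argument that it does. Spurious copies are a genuine threat: every non-backtracking closed walk of length $6$ in $X$ produces a $C_6$ in $\ga(X)$ that is not a crown, and every path of length $2$ produces a $2K_2$; for low-degree vertices (crowns $2K_1$, $2K_2$, $C_6$) such copies cannot be distinguished from true crowns by isomorphism type alone, so the ``separate case analysis'' you defer is where all the work lives. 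Until that recognition lemma is proved, both your main route and your fallback (which also relies on crowns being intrinsic in order to identify reverse-arc pairs) are incomplete. To be fair, the paper's own proof is terse at precisely the same point---it asserts $L(X)\cong L(Y)$ citing only Property \ref{property:M:6}---but your write-up makes explicit that this is the step still owed, and does not discharge it.
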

\begin{proof}
Suppose that $\ga(X)$ is isomorphic to $\ga(Y)$, then by Property \ref{property:M:6} of $M$  we note that $L(X)$ is isomorphic to $L(Y)$. By Theorem \ref{Thm:Whitney} and Part \ref{Ex:cycle:2} of Example \ref{Ex:cycle}, the result follows.
\end{proof}

Next, we prove that the $\ga$ function is additive with respect to the disjoint union.
\begin{lemma}
Let $X$ be a graph with connected components $X_1,X_2,\ldots,X_k$ {\it i.e.,} $X=X_1\cupdot X_2\cupdot\ldots\cupdot X_k$. Then $$\ga(X_1\cupdot X_2\cupdot \ldots \cupdot X_k)\cong \ga(X_1)\cupdot \ga(X_2)\cupdot \ldots \cupdot \ga(X_k).$$
\end{lemma}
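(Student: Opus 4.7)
The natural approach is to work directly from the definition of the symmetric edge adjacency matrix. The key observation is that the $M$-adjacency of two oriented edges $e_i$ and $e_j$ is a purely local condition: it requires $t(e_i)=s(e_j)$ (or symmetrically, a shared vertex after considering $M^T$), and in particular forces $e_i$ and $e_j$ to lie in the same connected component of $X$. Consequently, there are no nonzero entries in $M(X)$ linking an oriented edge of $X_i$ with an oriented edge of $X_j$ for $i\neq j$.

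The plan is therefore as follows. First I would fix an orientation and labelling of the edges of $X$ component by component: list the oriented edges of $X_1$, then those of $X_2$, and so on, and likewise list the reversed edges in the same grouped order. With this ordering, the $2m\times 2m$ matrix $M(X)$ is (after a block-wise permutation that puts each component's forward and reverse edges adjacent to one another) a block diagonal matrix whose diagonal blocks are exactly $M(X_1),M(X_2),\ldots,M(X_k)$. Transposing and adding preserves the block-diagonal structure, so
\[
M(X)+M(X)^T \;=\; \mathrm{diag}\bigl(M(X_1)+M(X_1)^T,\,\ldots,\,M(X_k)+M(X_k)^T\bigr),
\]
again up to a simultaneous relabelling of the vertices of $\ga(X)$.

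Since the adjacency matrix of a disjoint union of graphs is precisely the block diagonal sum of their adjacency matrices, and since the permutation used above corresponds to a relabelling of the vertex set of $\ga(X)$, the matrix identity above realises a graph isomorphism
\[
\ga(X_1\cupdot X_2\cupdot\ldots\cupdot X_k)\;\cong\; \ga(X_1)\cupdot \ga(X_2)\cupdot \ldots \cupdot \ga(X_k).
\]

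There is no substantive obstacle here; the only thing to be careful about is the bookkeeping of the ordering of oriented edges, specifically making sure that the permutation that groups each component's forward edges with its reverse edges is applied simultaneously to rows and columns, so that the resulting block-diagonal decomposition reflects an actual isomorphism of the underlying graph and not merely a spectral coincidence.
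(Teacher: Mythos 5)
Your proposal is correct and follows essentially the same route as the paper: both arguments rest on the observation that oriented edges in different components are never $M$-adjacent, so that after a suitable simultaneous row-and-column permutation (the paper exhibits this permutation matrix explicitly for $k=2$ and inducts) the matrix $M+M^T$ becomes block diagonal with blocks $M(X_i)+M(X_i)^T$, which is exactly the adjacency matrix of the disjoint union $\ga(X_1)\cupdot\cdots\cupdot\ga(X_k)$.
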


\begin{proof}
 We give the proof for $k=2$ and the general case follows by induction on $k.$ Let $X_1,X_2$ be graphs with $m_1,m_2$ edges, respectively. Then $A(\ga(X_1 \cupdot X_2))$ and $A(\ga(X_1)\cupdot\ga(X_2))$ have the following block structures, respectively. 
 
 $$A(\ga(X_1\cupdot X_2))=\begin{pmatrix}
 A_1 & 0 & B_1 & 0\\
 0 & A_2 & 0 & B_2\\
 B_1 & 0 & A_1 & 0\\
 0 & B_2 & 0 & A_2
 \end{pmatrix}$$
 
 $$A(\ga(X_1)\cupdot\ga(X_2))=
\begin{pmatrix}
A(\ga(X_1)) & 0\\
 0 & A(\ga(X_2))
 \end{pmatrix}
 = \begin{pmatrix}
 A_1 & B_1 & 0 & 0\\
 B_1 & A_1 & 0 & 0\\
 0 & 0 & A_2 & B_2 \\
 0 & 0 & B_2 & A_2
 \end{pmatrix}.$$
 It is easy to see that $$P^TA(\ga(X_1\cupdot X_2))P=A(\ga(X_1)\cupdot\ga(X_2)),$$ where $P=\begin{pmatrix}
 \I_{m_1} & 0 & 0 & 0\\
 0 & 0 & \I_{m_2} & 0\\
 0 & \I_{m_1} & 0 & 0\\
 0 & 0 & 0 & \I_{m_2}
 \end{pmatrix}$ is a permutation matrix.
 \end{proof}

We will see a few examples to observe the pattern of graphs under the $\ga$ function.  For more examples, one can refer the Table~\ref{table:more}.

\begin{example} \label{ex:path}
\begin{enumerate}

 \item \label{ex:path:1} If $X=P_n$ then $\ga^{n-1} (X)$ is a null graph. Table \ref{tab:path} shows the effect of repeated application of the $\ga$ function on the path graph.
 \begin{table}[ht!]
     \centering
     \begin{tabular}{|c|c|c|c|}
    \hline
$X$ & $\ga(X)$ & $\ga ^2(X)$ & $\ga^3(X)$\\
\hline
\begin{tikzpicture}
\vertex (1) at (1,2) {};
\vertex (2) at (1.5,2) {};
\vertex (3) at (2,2) {};
\vertex (4) at (2.5,2) {};
    
    \path[-]
    (1) edge (2)
    (2) edge (3)
    (3) edge (4)
    ;
\end{tikzpicture}
&
\begin{tikzpicture}
 \vertex (1) at (1,2) {};
    \vertex (2) at (1.5,2) {};
    \vertex (3) at (2,2) {};
    \vertex (5) at (2.5,2) {};
    \vertex (6) at (3,2) {};
    \vertex (7) at (3.5,2) {};
    
    \path[-]
    (1) edge (2)
    (2) edge (3)
    (5) edge (6)
    (6) edge (7)
    ;
\end{tikzpicture}&

\begin{tikzpicture}
   \vertex (1) at (1,2) {};
    \vertex (2) at (1.5,2) {};
    \vertex (3) at (2,2) {};
    \vertex (4) at (2.5,2) {};
    \vertex (5) at (3,2) {};
    \vertex (6) at (3.5,2) {};
    \vertex (7) at (4,2) {};
    \vertex (8) at (4.5,2) {};
    \path[-]
    (1) edge (2)
    (3) edge (4)
    (5) edge (6)
    (7) edge (8);

\end{tikzpicture}&

\begin{tikzpicture}
\vertex (1) at (1,2) {};
\vertex (2) at (1.5,2) {};
\vertex (3) at (2,2) {};
\vertex (4) at (2.5,2) {};
\vertex (5) at (3,2) {};
\vertex (6) at (3.5,2) {};
\vertex (7) at (4,2) {};
\vertex (8) at (4.5,2) {};
\end{tikzpicture}\\
\hline
\end{tabular}
     \caption{}
     \label{tab:path}
 \end{table}
 
 \item \label{ex:path:2}
 If $X=K_{1,n},$ then $\ga(X)$ is a crown graph on the $2n$ vertices. In particular, if $X=K_{1,4}$ then $\ga(X)$ is a cube. Recall that a crown graph on $2n$ vertices is a graph with two sets of vertices $\{v_1,v_2,\ldots,v_n\}$ and $\{v_1',v_2',\ldots,v_n'\}$, with an edge from $v_i$ to $v_j^\prime$ whenever $i\neq j.$
 
 \item If $X=K_{2,3}$, then $\ga(X)$ is a $6$-prism graph.
 
 \begin{figure}[ht!]
\begin{center}
\begin{tabular}{cc}
    \begin{tikzpicture}
    \vertex (1) at (1,0) {};
    \vertex (2) at (0,1) {};
    \vertex (3) at (1.5,2) {};
    \vertex (4) at (3,1) {};
    \vertex (5) at (2,0) {};
    \path[-]
    (1) edge (2)
    (2) edge (5)
    (3) edge (1)
    (3) edge (5)
    (4) edge (1)
    (4) edge (5)
    ;
\end{tikzpicture}
&
 \begin{tikzpicture}[xscale=0.5,yscale=0.5]
    \vertex (1) at (3,4) {};
    \vertex (2) at (4,4) {};
    \vertex (3) at (6,2) {};
    \vertex (4) at (4,0) {};
    \vertex (5) at (3,0) {};
    \vertex (6) at (1,2) {};
    \vertex (7) at (3,3) {};
    \vertex (8) at (4,3) {};
    \vertex (9) at (4.5,2) {};
    \vertex (10) at (4,1) {};
    \vertex (11) at (3,1) {};
    \vertex (12) at (2.5,2) {};
   
    \path[-]
    (1) edge (2)
    (2) edge (3)
    (3) edge (4)
    (4) edge (5)
    (5) edge (6)
    (6) edge (1)
    (7) edge (8)
    (8) edge (9)
    (9) edge (10)
    (10) edge (11)
    (11) edge (12)
     (7) edge (12)
    (1) edge (7)
    (2) edge (8)
    (3) edge (9)
    (4) edge (10)
    (5) edge (11)
    (6) edge (12)
     ;
\end{tikzpicture}\\
\end{tabular}
\end{center}
\caption{$K_{2,3}$ and $\ga(K_{2,3}).$}
    \label{fig:K_{2,3}}
  \end{figure}
\end{enumerate}
\end{example}

The following results provide how the $\ga$ function preserves connectedness and bipartiteness. Unless specified otherwise, we assume that $A(\ga(X))=\left[\begin{array}{c|c}
  \A+\D & \B+\C\\
  \hline
  \B+\C & \A+\D
\end{array}
\right]$ and  $A_0=\A+\D, B_0=\B+\C.$

\begin{proposition} \label{pro:conn}
\begin{enumerate} 
 \item \label{pro:conn:1} Let $X$ be a connected graph. Then $\ga (X)$ is connected if and only if $X$ is not a cycle graph or a path graph.
Moreover, $\ga(X)$ cannot be a cycle graph unless $X=K_{1,3}.$

\item  \label{pro:conn:3} Let $\ga(X)$ be a connected graph, then $\ga (X)$ has a cut edge if and only if $X$ contains a pendant vertex which is adjacent to a vertex of degree two.

\item \label{pro:conn:5}Let $X$ be a connected graph. Then $X$ is bipartite if and only if $\ga(X)$ is bipartite.
\end{enumerate}
\end{proposition}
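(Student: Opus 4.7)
For the first assertion of Proposition~\ref{pro:conn}, my plan is to exploit that $\ga(X)$ is a double cover of $L(X)$: since $L(X)$ is connected (because $X$ is), $\ga(X)$ has at most two components, and the fixed-point-free involution $\sigma$ sending each directed edge to its reverse must swap them whenever there are two (otherwise $L(X) = \ga(X)/\sigma$ would be disconnected). Hence $\ga(X)$ is connected if and only if some directed edge $e$ and its reverse $e^{-1}$ lie in the same component. If $X$ is neither a path nor a cycle, $X$ contains a vertex $v$ of degree at least $3$; for an edge $e=(u,v)$ and two distinct neighbors $w_1, w_2 \in N(v)\setminus\{u\}$, the sequence $(u,v),(v,w_1),(w_2,v),(v,u)$ is a walk in $\ga(X)$, so $e$ and $e^{-1}$ are connected. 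The converse direction follows from $\ga(C_n)=2C_n$ and $\ga(P_n)=2P_{n-1}$ (Examples~\ref{Ex:cycle} and~\ref{ex:path}). For the ``moreover'' clause, direct computation shows the vertex $(u,v)$ has degree $d_X(u)+d_X(v)-2$ in $\ga(X)$; demanding this equal $2$ on every directed edge forces $d(u)+d(v)=4$ on every edge of $X$, and a case split on the $(2,2)$ and $(1,3)$ patterns narrows $X$ to $C_n$ or $K_{1,3}$, of which only the latter yields a single cycle.

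For the second assertion, the easy direction is a leaf argument: if the pendant $u$ is adjacent to a degree-two vertex $v$ with other neighbor $u_2$, then $(u,v)$ has no neighbors via pivot $u$ and the unique neighbor $(v,u_2)$ via pivot $v$, so the incident edge is a bridge. For the converse, take a cut edge $e=\{(u_1,v),(v,u_2)\}$ of $\ga(X)$ with pivot $v$. First I would show $d(v)=2$: when $d(v)\ge 3$, $e$ lies in the crown graph $K_{d(v),d(v)}$ minus a perfect matching at pivot $v$, which is $2$-edge-connected, putting $e$ on a cycle. Next I would show $X-v$ is disconnected: otherwise, a path in $X-v$ from $u_1$ to $u_2$ together with the edges $vu_1, vu_2$ produces a cycle in $X$ through $v$ whose non-backtracking traversal is a cycle of $\ga(X)$ containing $e$. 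Let $Y_1, Y_2$ be the components of $X-v$ containing $u_1, u_2$, set $X_i = Y_i \cup \{vu_i\}$, and observe that the subgraph of $\ga(X)$ induced on the directed edges of $X_i$ is exactly $\ga(X_i)$, so $\ga(X) = \ga(X_1) \cup \ga(X_2) \cup \{e, e'\}$ where $e'=\{(u_2,v),(v,u_1)\}$ is the other pivot-$v$ edge. If both $\ga(X_i)$ were connected, $\ga(X)-e$ would remain connected through $e'$, contradicting bridgeness; hence some $\ga(X_i)$ is disconnected. Applying the first assertion to that $X_i$ (which already has $v$ as a pendant) forces $X_i$ to be a path, and its other endpoint is a pendant $w$ of $X$ whose $X$-neighbor has degree $2$.

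For the third assertion the forward direction is a coloring argument: if $X$ is bipartite with parts $V_0, V_1$, assigning to each directed edge $(u,v) \in V(\ga(X))$ the part of its tail $u$ gives a proper $2$-coloring, because every edge of $\ga(X)$ comes from a non-backtracking walk $(a,b)(b,c)$ in $X$ whose tails $a, b$ lie in opposite parts. Conversely, an odd cycle $v_0 v_1 \cdots v_{2k} v_0$ of $X$ lifts to the odd cycle $(v_0,v_1),(v_1,v_2),\ldots,(v_{2k},v_0)$ of length $2k+1$ in $\ga(X)$, so $\ga(X)$ bipartite forces $X$ bipartite. The main obstacle in the proof is the converse in the second assertion, where the precise configuration ``pendant adjacent to a degree-two vertex'' must be extracted from the bridgeness of $e$ via both the local crown-graph analysis and the reduction of one subgraph to the first assertion.
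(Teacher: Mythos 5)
Your proof is correct, but it follows a genuinely different route from the paper's, and in several places a more careful one. For Part (1) the paper argues through the block structure of $A(\ga(X))$: if $\ga(X)$ is disconnected then $\B+\C=0$, so every vertex of $X$ has degree at most $2$; you instead use the covering-space structure (at most two components, necessarily swapped by the edge-reversal involution) together with an explicit walk $(u,v),(v,w_1),(w_2,v),(v,u)$ joining a directed edge to its reverse at a vertex of degree at least $3$ --- a more transparent justification of the same dichotomy. For the ``moreover'' clause the paper reduces to the fact that a connected graph is isomorphic to its line graph only if it is a cycle, whereas you use the degree formula $d(u)+d(v)-2$ and a parity-of-degrees case split; both work. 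The largest divergence is Part (2): the paper's converse is a terse symmetry argument (no edge coming from $B_0$ or from either $A_0$-copy can be a cut edge, so $\ga(X)$ must have a pendant vertex), while you localize the cut edge at its pivot $v$, rule out $d(v)\ge 3$ via the $2$-edge-connectivity of the crown subgraph $St(v)$ induces, show $X-v$ is disconnected, decompose $\ga(X)$ as $\ga(X_1)\cupdot\ga(X_2)$ plus the two pivot-$v$ edges, and feed one piece back into Part (1). This is longer but makes explicit the structural facts the paper only gestures at, and it reuses the crown decomposition that the paper itself develops later in Theorem \ref{thm:char}. For Part (3) your tail-colouring and odd-cycle lift are the combinatorial translation of the paper's matrix argument (choosing an orientation so that $M$ is block anti-diagonal), and you supply the converse that the paper declares ``easy to see.'' No gaps.
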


\begin{proof}
Proof of Part~\ref{pro:conn:1}.
Let us suppose that $\ga (X)$ is not a connected graph. Then $B + C = 0$ and hence $B,C=0.$ Thus we conclude that the degree of each vertex in X is at most $2$. Since $X$ is a connected graph, $X$ is either a cycle graph or a path graph. From part \ref{Ex:cycle:1} of Example \ref{Ex:cycle} and \ref{ex:path} one can see that the converse also holds.

For the second part of the Proposition, let $\ga(X)$ be a cycle graph on $2k$ $(k\neq 3)$ vertices. From the structure of the adjacency matrix of a cycle graph, we see that when we add the four blocks of $A(C_{2k})$, we obtain $2A(C_{k}).$ On adding all the blocks of $A(\ga(X)),$ we get $2A(L(X))$. We deduce that $L(X)$ is a cycle graph on $k$ vertices. However, we know from \cite{Harary1969} that a connected graph is isomorphic to its line graph if and only if it is a cycle graph. This implies that $X$ is a cycle graph on $k$ vertices, which is a contradiction to Part \ref{Ex:cycle:1} of Example \ref{Ex:cycle}. If $X = K_{1,3}$, then from Part~\ref{Ex:cycle:2} of Example \ref{Ex:cycle} we have already seen that $\ga(X)$ is $C_6$.\\

Proof of Part~\ref{pro:conn:3}. Let $\ga(X)$ have a cut edge and no pendant vertex. From the structure of $A(\ga(X)),$ it can be observed that $\ga(X)$ has two copies of a graph each of whose adjacency matrix is $A_0.$ Since $\ga(X)$ is connected, the edges corresponding to the matrix $B_0$ connects the two copies of the graph given by $A_0.$ As $B_0$ is symmetric, no edge given by the matrix $B_0$ can be a cut edge. Also, note that no edge in the two copies given by $A_0$ in $A(\ga(X))$ can be a cut edge. Therefore $\ga(X)$ has a pendant vertex which implies that $X$ has a pendant vertex that is adjacent to a vertex of degree $2.$
The converse is easy to see as well.\\

Proof of Part \ref{pro:conn:5}. Suppose that $X$ is bipartite with vertex partitions $\{v_1,v_2,\ldots,v_n\}$ and $\{v_1^\prime,v_2^\prime,\ldots,v_k^\prime\}.$ Choose an orientation in such a way that $e_i's$ are the directed edges from $v_i$ to $v_j^\prime$ for all $1\leq i\leq n, 1\leq j \leq k.$ Then observe that
$M=\left[\begin{array}{c|c}
  0 & \B\\
  \hline
  \C & 0
\end{array}
\right]$ which implies
\begin{equation}\label{eqn:bip}
M+M^T=\left[\begin{array}{c|c}
  0 & B_0\\
  \hline
  B_0 & 0
\end{array}
\right].
\end{equation}
Therefore, $\ga(X)$ is bipartite.
The converse is easy to see.
\end{proof}

From the proof of Part \ref{pro:conn:5} of Proposition \ref{pro:conn}, one can see that if $X$ is bipartite, the spectrum of $\ga(X)$ is given by the union of spectra of $A(L(X))$ and $-A(L(X)).$ It is possible to know the number of triangles in $\ga^k(X)$, once we know the number of triangles in $X$ from the following result. 
\begin{proposition}\label{pro:tri}
Let $t_i$ be the number of triangles in $\ga^{i-1}(X),$ where $i\ge 1.$ Then $t_i=2^{i-1}t_1$.
\end{proposition}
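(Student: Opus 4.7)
The plan is to prove the base-case identity that for any simple graph $Y$, the number of triangles in $\ga(Y)$ equals twice the number of triangles in $Y$. Applied to $Y = \ga^{i-1}(X)$ this gives $t_{i+1} = 2 t_i$, and the formula $t_i = 2^{i-1} t_1$ follows by induction on $i$. So the entire content lies in this base identity, which I will establish combinatorially.

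First I would unpack the adjacency in $\ga(X)$ at the level of arcs. Viewing the vertices of $\ga(X)$ as the arcs of $X$, a direct computation from the definition of $M + M^T$ shows that two arcs $a$ and $b$ are adjacent if and only if their underlying edges share exactly one endpoint $w$, with one arc incoming at $w$ and the other outgoing. In particular no arc is adjacent to its own reverse, so the three arcs of any triangle in $\ga(X)$ have pairwise incident and pairwise distinct underlying edges in $X$. A short intersection-pattern analysis (looking at where each pair of the three edges meets) then shows that these three underlying edges either (A) share a single common vertex of $X$, or (B) form a triangle in $X$ on three distinct vertices.

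Case (A) contributes nothing, by parity: at the common vertex $v$, each of the three arcs is either incoming or outgoing at $v$, and the adjacency criterion forces every pair of arcs to be split between these two classes, which is impossible for a $3$-element set. In Case (B), for each triangle $v_1 v_2 v_3$ of $X$ I would parametrize the $2^3 = 8$ orientations of its three edges and, at each vertex $v_k$, impose that exactly one of the two incident edges points into $v_k$. Writing the orientations as $\{0,1\}$-indicators, these three conditions become three $\mathbb{F}_2$-linear equations whose solution set has exactly two elements, corresponding to the two coherent cyclic orientations $v_1 \to v_2 \to v_3 \to v_1$ and its reverse. Thus each triangle in $X$ contributes exactly two triangles to $\ga(X)$, establishing $t_2 = 2 t_1$.

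The main obstacle is the bookkeeping in Case (B): one has to keep the in/out conventions at each vertex $v_k$ consistent with the chosen orientation indicators on the three edges, so that the conditions assemble into the correct $\mathbb{F}_2$-linear system. Once this is set up carefully, both cases are routine, and the general formula then follows by applying the base identity inductively to $X, \ga(X), \ga^2(X), \ldots, \ga^{i-2}(X)$.
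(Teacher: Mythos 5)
Your proposal is correct, and it reaches the key identity $t_2=2t_1$ by a genuinely different route from the paper. The paper works algebraically: it writes $6t_2=\operatorname{Tr}\bigl((M+M^T)^3\bigr)=2\operatorname{Tr}(M^3)+3\operatorname{Tr}(M^2M^T)+3\operatorname{Tr}(M(M^T)^2)$, shows the mixed traces vanish entrywise using simplicity of $X$, and then invokes Property (7) of $M$ (namely $\operatorname{Tr}(M^3)=N_3$) together with the count $t_1=N_3/6$ to conclude $t_2=N_3/3=2t_1$. You instead classify the triangles of $\ga(X)$ directly: the dichotomy between three mutually incident edges sharing a vertex and three edges forming a triangle of $X$, the parity argument that kills the star case, and the count of exactly two coherent cyclic orientations in the triangle case. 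The two arguments are really dual descriptions of the same fact --- your Case (A) is the combinatorial shadow of the vanishing mixed traces (equivalently, of the crown subgraphs $St(v)$ being triangle-free), and your two cyclic orientations are the $2\operatorname{Tr}(M^3)$ term --- but yours is self-contained and elementary, needing neither the trace expansion nor the identity $\operatorname{Tr}(M^k)=N_k$, and it makes visible \emph{why} the factor $2$ appears. It also has the merit of stating the inductive step honestly: the base identity is proved for an arbitrary simple graph $Y$ and then applied to $Y=\ga^{i-1}(X)$ (which is again simple, since $M+M^T$ is a $0$--$1$ matrix with zero diagonal), whereas the paper compresses this into ``Clearly, $t_k=2t_{k-1}$.'' The paper's version is shorter given the matrix machinery already in place and ties the proposition to the zeta-function theme of the article; either proof is acceptable.
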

\begin{proof}
We shall prove the result by induction on $i$. We begin by proving for $i=2$. It is easy to see that 
$$6t_2= Tr((M+M^T)^{3})=2Tr(M^3)+3Tr(M^2M^T)+3Tr(M(M^T)^2).$$
We now claim that $Tr(M^2M^T)= Tr(M(M^T)^2)=0.$ Since $M$ is a nonnegative matrix, $Tr(M^2M^T)=0$ if and only if $(M^2M^T)_{ii}=0$ for all $i.$ We have $$(M^2M^T)_{ii}=\sum\limits_{k=1}^{2m} (M^2)_{ik}(M^T)_{ki}=\sum\limits_{k=1}^{2m} \sum\limits_{j=1}^{2m}M_{ij}M_{jk}M_{ik}.$$ If each of $M_{ij},M_{jk}$ and $M_{ik}$ are nonzero, then $e_k=e_k^{-1}.$ Consequently, $X$ has multiple edges, which is a contradiction.  Similarly, one can show that $(M(M^2)^T)_{ii}=0.$ Thus, $3t_2=Tr(M^3).$ From Property \ref{property:M:7} of $M$, we have another identity $t_2=\frac{N_3}{3}.$ Hence, the result follows from the fact that  $t_1= \frac{N_3}{6}$, as each vertex  of a triangle can be an initial vertex and two directions.\\
Assume that the result is true for all $i\leq k-1$. Clearly, $t_k=2t_{k-1}$. By the induction hypothesis, the proof is complete.
\end{proof}

 Next, we will present a characterization of symmetric edge graphs analogous to that of line graphs, as given by Krausz in \cite{krausz1943demonstration}. By the {\em star graph at the vertex $u$} in a graph $X,$ denoted by $St(u)$, we mean a subgraph of $X$ with $V(St(u))=\{w\mid w\; \mbox{is adjacent to}\; u\}\cup\{u\}$ and $E(St(u))=\{e\mid u \;\mbox{is incident with}\;e\}.$ The approach used in the proof of Theorem \ref{thm:char} is motivated by the proof of Theorem 8.4 in \cite{Harary1969}.
 
 \begin{theorem}\cite{krausz1943demonstration}\label{thm:char:line}
 A graph is a line graph if and only if its edges can be partitioned into complete subgraphs with the property that no vertex lies in more than two of the subgraphs. 
\end{theorem}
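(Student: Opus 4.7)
The plan is to establish both directions of Krausz's characterization via an explicit construction that relates a root graph $X$ to an edge-partition of $L(X)$ into cliques. Throughout, I will use the fact that vertices of $L(X)$ correspond to edges of $X$ and that two such vertices are adjacent in $L(X)$ exactly when the underlying edges of $X$ share an endpoint.

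For the necessity direction, suppose $H=L(X)$, and for each $v\in V(X)$ let $S_v$ be the set of vertices of $H$ that correspond to edges of $X$ incident to $v$. Since any two edges of $X$ sharing the vertex $v$ are adjacent in $L(X)$, each $S_v$ induces a clique in $H$. I would then argue that the family $\{S_v\}_{v\in V(X)}$ partitions $E(H)$: an edge of $H$ joining vertices $e,f$ arises from edges of $X$ with a common endpoint, and the absence of multiple edges in $X$ makes this endpoint unique, so the edge lies in exactly one $S_v$. Finally, every vertex $e=uv$ of $H$ is contained only in $S_u$ and $S_v$, hence in at most two parts of the partition.

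For the sufficiency direction, suppose $\mathcal{K}=\{K_1,\dots,K_r\}$ is an edge-partition of $H$ into cliques with the property that no vertex lies in more than two members of $\mathcal{K}$. I would build $X$ by taking the $K_i$'s as vertices and, for each vertex $v$ of $H$, inserting an edge of $X$ joining the (one or two) cliques that contain $v$. A vertex of $H$ lying in exactly one clique becomes a pendant edge via a fresh auxiliary vertex, and any isolated vertices of $H$ are accommodated by adjoining isolated edges to $X$. The natural map $\varphi:V(H)\to E(X)$ sending $v$ to its associated edge is then a bijection, and the adjacency equivalence to be verified is: $v_1v_2\in E(H)$ iff $v_1,v_2$ lie together in some $K_i$ iff $\varphi(v_1)$ and $\varphi(v_2)$ share the endpoint $K_i$ in $X$ iff $\varphi(v_1)\sim\varphi(v_2)$ in $L(X)$.

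The main obstacle I anticipate is ensuring the backward construction produces a simple graph $X$. Multiple edges in $X$ would arise precisely when two distinct vertices $v,w$ of $H$ both lie in the same pair $K_i,K_j$; but in that case the edge $vw$ of $H$ would belong to the edge set of both $K_i$ and $K_j$, violating the partition hypothesis. Loops are ruled out automatically, since a vertex of $H$ either does or does not belong to a given clique. Once these simplicity checks are handled, together with the bookkeeping for vertices lying in a single clique and for isolated vertices of $H$, the adjacency equivalence of the previous paragraph yields $L(X)\cong H$ and completes the argument.
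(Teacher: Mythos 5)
Your proof is correct and is the classical Krausz argument (the one in Theorem 8.4 of \cite{Harary1969}): the star at each vertex of $X$ yields the clique partition, and conversely the cliques together with fresh pendant endpoints for singly-covered vertices reconstruct a simple root graph. The paper itself states this theorem as a cited result without proof, but the scheme you use is exactly the template the authors adapt for their analogous Theorem \ref{thm:char} on symmetric edge graphs, so there is nothing to reconcile.
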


\begin{theorem}\label{thm:char}
A graph is a symmetric edge graph if and only if its edges can be partitioned into crown subgraphs in such a way that each vertex lies in at most two of the subgraphs.
\end{theorem}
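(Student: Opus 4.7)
The plan is to transcribe Harary's proof of Krausz's theorem for line graphs (Theorem \ref{thm:char:line}, following Theorem 8.4 of \cite{Harary1969}) into the crown-graph setting. The natural candidate for a ``partition element'' of $\ga(X)$ is the \emph{local crown at each vertex $v$ of $X$}: among the $2\deg(v)$ oriented edges incident to $v$, the set of in-edges $I_v$ and the set of out-edges $O_v$ together induce a crown subgraph of $\ga(X)$, because the definition of $M$ forces $I_v$ to be joined completely to $O_v$ except for the perfect matching coming from reversal.

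For necessity, I would take $G=\ga(X)$, identify each vertex of $G$ with an oriented edge of $X$, and for every $v\in V(X)$ let $S_v$ denote the subgraph of $G$ induced on $I_v\cup O_v$. Using the entries of $M$, I would verify that $S_v$ is a crown on $2\deg(v)$ vertices with bipartition $(I_v,O_v)$; that every edge of $G$ lies in a unique $S_v$, since the simplicity of $X$ forces any two adjacent oriented edges in $G$ to meet at a unique vertex of $X$; and that each vertex $e$ of $G$ lies in exactly the two crowns $S_{s(e)}$ and $S_{t(e)}$, yielding the ``at most two'' bound.

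For sufficiency, given a crown partition $S_1,\ldots,S_t$ of $E(G)$ with each vertex of $G$ in at most two crowns, I would mimic Harary's reconstruction. I would set $V(X)=\{v_1,\ldots,v_t\}$ together with one auxiliary vertex for each $u\in V(G)$ lying in only one crown, so that every vertex of $G$ is now in exactly two (real or auxiliary) crowns. Each crown $S_i$ carries an intrinsic bipartition $(A_i,B_i)$ together with the deleted perfect matching between $A_i$ and $B_i$; I would treat the matched pairs in the various $S_i$'s as ``reversal pairs'' of oriented edges of $X$. A vertex $u\in A_i\cap B_j$ then represents an oriented edge $e$ with $s(e)=v_i$ and $t(e)=v_j$, while its matched partner represents $e^{-1}$. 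A short computation with $M(X)$ would then recover $\ga(X)\cong G$.

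The crux of the argument, as in all such reconstruction theorems, is the \emph{global coherence} of the bipartitions and matchings. When a vertex $u$ of $G$ lies in two crowns $S_i$ and $S_j$, the matched partner of $u$ inside $S_i$ must coincide with its matched partner inside $S_j$, and the sides $(A,B)$ must align so that the resulting reversal involution on $V(G)$ is well defined and yields a simple graph $X$. For crowns on six or more vertices the bipartition is uniquely determined (the crown is connected and bipartite), so no ambiguity arises; the delicate case is a crown on four vertices, which is a pair of independent edges and admits two valid bipartitions. The heart of the proof will therefore be to use the ``at most two crowns per vertex'' hypothesis to exhibit a globally consistent choice of bipartitions, in analogy with the clique-compatibility step in Harary's proof of Krausz's theorem.
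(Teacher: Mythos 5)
Your proposal follows essentially the same route as the paper's proof: for necessity, the star at each vertex $v$ of $X$ induces a crown on the $2\deg(v)$ oriented edges incident to $v$ (the paper invokes its $K_{1,n}$ example for this), and for sufficiency the paper likewise takes as $V(X)$ the set of crowns together with one new vertex for each matched pair of $Y$-vertices lying in only one crown, with adjacency given by nonempty intersection. Two small remarks: your auxiliary vertices should be introduced one per matched pair $\{u,u'\}$ rather than one per vertex $u$ (otherwise every pendant edge of the reconstructed $X$ gets doubled, which is exactly what the paper's set $U_1$, containing one element from each pair, is designed to avoid), and the ``global coherence'' of bipartitions and reversal matchings that you rightly single out as the crux is not actually carried out in the paper's proof either, which stops after defining $X$ without verifying $\ga(X)\cong Y$.
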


\begin{proof}
Let $Y$ be the symmetric edge graph of $X.$ Without loss of generality, $X$ is connected. Let $v$ be any vertex of $X$, then by Part~\ref{ex:path:2} of Example \ref{ex:path} we see that $St(v)$ induces a crown subgraph of $Y$. The edges of $Y$ are exactly in one of the subgraphs. For any $e\in E(X),$ there exists exactly two vertices $a,b\in V(X)$ such that $e\in St(a)\cap St(b),$ which shows that no vertex of $Y$ is in more than two of the subgraphs.

Let $H_1, H_2,\ldots, H_n$ be the partition of the graph $Y$ satisfying the hypothesis. We explain the construction of $X$ from $Y,$ where $Y=\ga(X).$ Let $H=\{H_1,H_2,\ldots,H_n\},$ $U$ be the set of vertices of $Y$ which lies in only one of the partitions $H_i.$ Also, note that $e_i\in U$ if and only if $e_i^{-1}\in U.$ Let $U_1\subset U$ such that $U_1$ contains half of the elements of $U$ and either $e_i$ or $e_i^{-1}\in U_1$. The vertices of $X$ are given by $H\cup U_1$. Two vertices of $X$ are adjacent if they have a nonempty intersection. 
\end{proof}

\begin{corollary}
Let $X$ be a connected graph. Then $\ga(X)$ is unicyclic if and only if $X$ is a tree with $\Delta(X)=3,$ where $\Delta(X)$ denotes the maximum degree of $X$ and there is exactly one vertex of degree three.
\end{corollary}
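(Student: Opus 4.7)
The plan is to compare vertex and edge counts for $\ga(X)$ using the identities provided earlier, and then combine this with Proposition \ref{pro:conn} to force the structural conclusion.

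First I would translate ``unicyclic'' into an arithmetic condition. Since $\ga(X)$ unicyclic means $\ga(X)$ is connected with $|E(\ga(X))|=|V(\ga(X))|$, I use $|V(\ga(X))|=2|E(X)|$ together with Equation \ref{eqn:edge} to obtain
\begin{equation*}
|E(\ga(X))|-|V(\ga(X))|=\sum_{v\in V(X)} d_v^2-4|E(X)|=\sum_{v\in V(X)} d_v(d_v-2).
\end{equation*}
Let $n_k$ denote the number of vertices of $X$ of degree $k$. Then the condition $|E(\ga(X))|=|V(\ga(X))|$ becomes
\begin{equation*}
n_1=\sum_{k\geq 3}k(k-2)\,n_k. \tag{$\ast$}
\end{equation*}

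Next, I would bring in the handshake lemma in the form of the cyclomatic number. Writing $\beta(X)=|E(X)|-|V(X)|+1$ for a connected graph $X$, a short computation gives
\begin{equation*}
n_1=2-2\beta(X)+\sum_{k\geq 3}(k-2)\,n_k. \tag{$\ast\ast$}
\end{equation*}
Subtracting $(\ast\ast)$ from $(\ast)$ yields the clean identity
\begin{equation*}
\sum_{k\geq 3}(k-1)(k-2)\,n_k=2\bigl(1-\beta(X)\bigr),
\end{equation*}
whose left-hand side is non-negative, forcing $\beta(X)\in\{0,1\}$. I would rule out $\beta(X)=1$: it would force $n_k=0$ for all $k\geq 3$, so $X$ would have maximum degree $2$, hence be a cycle (being connected with $\beta=1$); but by Part \ref{Ex:cycle:1} of Example \ref{Ex:cycle} this makes $\ga(X)=2C_n$, which is not unicyclic. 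Therefore $\beta(X)=0$, i.e.\ $X$ is a tree, and then $\sum_{k\geq 3}(k-1)(k-2)n_k=2$. Since $(k-1)(k-2)\geq 2$ for every $k\geq 3$ with equality precisely at $k=3$, this forces $n_3=1$ and $n_k=0$ for $k\geq 4$, i.e.\ $\Delta(X)=3$ and exactly one vertex of $X$ has degree three.

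For the converse, assuming $X$ is a tree with $\Delta(X)=3$ and a unique vertex of degree $3$, I would verify $(\ast)$ directly (it reduces to $n_1=3$, which follows from $n_1=2+n_3$ for a tree whose other non-leaf vertices have degree $2$). This yields $|E(\ga(X))|=|V(\ga(X))|$. Since $X$ is not a path or cycle, Part \ref{pro:conn:1} of Proposition \ref{pro:conn} guarantees that $\ga(X)$ is connected, and a connected graph with $|E|=|V|$ is unicyclic. There is no real obstacle here; the main subtlety is simply remembering to exclude the cycle case $\beta(X)=1$ using Proposition \ref{pro:conn}, otherwise one would wrongly admit $X=C_n$.
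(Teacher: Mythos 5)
Your argument is correct, and it is a genuinely different proof from the one in the paper. The paper argues structurally via Theorem \ref{thm:char}: the edge set of $\ga(X)$ decomposes into the crown subgraphs induced by the stars $St(v)$, so a vertex of degree at least $4$ forces a crown subgraph with more than one cycle, a cycle in $X$ lifts to two disjoint cycles in $\ga(X)$, and two vertices of degree $3$ would give two disjoint copies of $C_6$; hence $X$ must be a tree with exactly one vertex of degree $3$ and no vertex of larger degree. Your proof instead extracts everything from the single identity $|E(\ga(X))|-|V(\ga(X))|=\sum_v d_v(d_v-2)$ (a consequence of Equation \ref{eqn:edge}) combined with the handshake/cyclomatic relation, arriving at $\sum_{k\geq 3}(k-1)(k-2)n_k=2(1-\beta(X))$, which forces $\beta(X)=0$, $n_3=1$ and $n_k=0$ for $k\geq 4$ in one stroke. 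The trade-off: the paper's crown-decomposition argument localizes \emph{where} the cycles of $\ga(X)$ live and is the natural companion to Theorem \ref{thm:char}, whereas your counting argument is more self-contained and arguably more airtight on the points the paper leaves implicit --- it explicitly rules out the cycle case $\beta(X)=1$ via Part \ref{Ex:cycle:1} of Example \ref{Ex:cycle}, automatically excludes paths (where the left-hand side would be $0$, not $2$), and invokes Part \ref{pro:conn:1} of Proposition \ref{pro:conn} to supply the connectivity of $\ga(X)$ needed in the converse. Both proofs are valid; yours trades structural insight for arithmetic completeness.
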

\begin{proof}
Suppose that $\ga(X)$ is unicyclic, which implies that $X$ does not contain a cycle. By Theorem \ref{thm:char}, it is clear that there does not exist a vertex in $X$ with a degree greater than or equal to $4$.  If there exists more than one vertex of degree $3$, then we get a contradiction to the hypothesis. The converse is easy to follow by Theorem \ref{thm:char}.
\end{proof}

\section{Double covers of line graph}\label{sec:DC}

 Let $X$ be a connected graph with $n$ vertices and $m$ edges. Let $\{v_1,v_2,\ldots,v_n\}$ be the vertex set of $X$. Let $V(X^{\prime \prime})=\{v_1^\prime,v_2^\prime,\ldots,v_n^\prime\}\cup\{v_{n+1}^\prime,v_{n+2}^\prime,\ldots,v_{2n}^\prime\}$ be bipartition of $X^{\prime\prime}$ and $E(X^{\prime \prime})$ be given by
 $$\{e_1,e_2,\ldots,e_m,e_{m+1}=e_1^{-1},e_{m+2}=e_2^{-1},\ldots,e_{2m}=e_m^{-1}\}.$$ We define a map $\phi: V(X^{\prime\prime})\mapsto V(X)$ such that $\phi(v_i^\prime)$ and $\phi(v_{n+i}^\prime)$ are mapped to $v_i$ for all $1\leq i \leq n.$ We label the edges of $X^{\prime \prime}$ such that $e_k$ is an edge from $v_i^\prime$ to $v_{n+j}^\prime$ ($i\neq j$) if and only if $e_{m+k}$ is an edge from $v_j^\prime$ to $v_{n+i}^\prime$ $(i\neq j)$. We illustrate this labelling in Example \ref{Ex:labelling}.
 Recall that the adjacency matrix of a bipartite graph can be written as $\begin{bmatrix}
0 & B\\
B^T & 0
\end{bmatrix}$, where $B$ is called the {\em bi-adjacency matrix}.
 
\begin{example}\label{Ex:labelling} 
In this example, we illustrate the labelling of $X^{\prime\prime}$, where $X$ is given in Figure \ref{fig:3cover}. We label the edges of $X^{\prime\prime}$ in the following manner: 
$$(1^\prime,8^\prime)=e_1,(1^\prime,9^\prime)=e_2,(1^\prime,11^\prime)=e_3,(2^\prime,9^\prime)=e_4,(2^\prime,10^\prime)=e_5,(3^\prime,12^\prime)=e_6,$$
$$(2^\prime,7^\prime)=e_1^{-1},(3^\prime,7^\prime)=e_2^{-1},(5^\prime,7^\prime)=e_3^{-1},(3^\prime,8^\prime)=e_4^{-1},(4^\prime,8^\prime)=e_5^{-1},(6^\prime,9^\prime)=e_6^{-1}.$$

\begin{figure}[ht!]
\begin{subfigure}[ht!]{0.3\textwidth}
\centering
\begin{tikzpicture}[xscale=0.5,yscale=0.7] 
    \vertex (1) at (0,0) [label=left:$6^\prime$]{};
    \vertex (2) at (0,1)[label=left:$5^\prime$]{};
    \vertex (3) at (0,2)[label=left:$4^\prime$]{};
    \vertex (4) at (0,3)[label=left:$3^\prime$]{};
    \vertex (5) at (0,4)[label=left:$2^\prime$]{};
    \vertex (6) at (0,5)[label=left:$1^\prime$]{};
    \vertex (7) at (3,0)[label=right:$12^\prime$]{};
    \vertex (8) at (3,1)[label=right:$11^\prime$] {};
    \vertex (9) at (3,2)[label=right:$10^\prime$]{};
    \vertex (10) at (3,3)[label=right:$9^\prime$]{};
    \vertex (11) at (3,4)[label=right:$8^\prime$]{};
    \vertex (12) at (3,5)[label=right:$7^\prime$]{};
    
\path[-]
(6) edge (11)
(6) edge (10)
(6) edge (8)
(5) edge (12)
(5) edge (10)
(5) edge (9)
(4) edge (12)
(4) edge (11)
(4) edge (7)
(3) edge (11)
(2) edge (12)
(1) edge (10)
;
\end{tikzpicture}
\caption{$X^{\prime\prime}$}
\label{fig:XotimesK2}
\end{subfigure}
\hfill
\begin{subfigure}[ht!]{0.6\textwidth}
         \centering
         \begin{tabular}{|l|llllll|}
\hline
& $7^\prime$ & $8^\prime$ & $9^\prime$ & $10^\prime$ & $11^\prime$ & $12^\prime$\\
\hline
$1^\prime$ & 0 & 1 & 1 & 0 & 1 & 0\\
$2^\prime$ & 1 & 0 & 1 & 1 & 0 & 0\\
$3^\prime$ & 1 & 1 & 0 & 0 & 0 & 1\\
$4^\prime$ & 0 & 1 & 0 & 0 & 0 & 0\\
$5^\prime$ & 1 & 0 & 0 & 0 & 0 & 0\\
$6^\prime$ & 0 & 0 & 1 & 0 & 0 & 0\\
\hline
\end{tabular}\\
\caption{Biadjacency matrix of $X^{\prime\prime}$}
\label{fig:biX}
     \end{subfigure}
\caption{}
\label{fig:exlabel}
\end{figure}
\end{example} 
 
 The rows and columns of $A(L(X^{\prime \prime}))$ are indexed by $E(X^{\prime \prime}).$ It is easy to see that $A(L(X^{\prime\prime}))$ has the following structure $$\begin{bmatrix}
  \mathbb{P} & \mathbb{Q}\\
  \mathbb{Q^T} & \mathbb{R}
  \end{bmatrix},$$ 
where $\mathbb{P,Q,R}$ are $m\times m$ matrices with the following properties:

\begin{enumerate}
    \item $\mathbb{P}=\mathbb{R}$. Since $\mathbb{P}_{ij}=1$ implies that $e_i$ is adjacent to $e_j$, the labelling defined above shows that $e_{m+i}$ is adjacent to $e_{m+j}.$
    
    \item $\mathbb{Q}=\mathbb{Q}^T.$ Since $\mathbb{Q}_{ij}=1$ implies $e_i$ is adjacent to $e_{m+j},$ the labelling defined above shows that $e_j$ is adjacent to $e_{m+i}.$
    
    \item \label{pro:pt:3} $\mathbb{P}+\mathbb{Q}=A(L(X)).$  Note that if $\mathbb{P}_{ij}=1$ then $\mathbb{Q}_{ij}=0$ and vice-versa. If $\mathbb{(P+Q)}_{ij}=\mathbb{P}_{ij}+\mathbb{Q}_{ij}=1,$ then from the definition of covering graph we have $A(L(X))_{ij}=1.$
    \end{enumerate}
     We obtain $L(X^{\prime\prime})$ is a double cover of $L(X)$.
    Also, from the point \ref{pro:pt:3} mentioned above and Theorem \ref{thm:unionspectra} we see that the spectrum of $A(L(X))$ is contained in the spectrum of $A(L(X^{\prime \prime}))$.
    To proceed with the proof of Theorem \ref{thm:Sec:Main}, we need to define claw free graphs. Recall that a claw is another name for the complete bipartite graph $K_{1,3}$. In contrast, a claw-free graph is a graph in which no induced subgraph is a claw. It was proved by Beineke in \cite{beineke1970characterizations} that the line graph of any graph is claw-free.

\begin{proposition}\label{pro:triIhara}
Let $X$ be a connected graph. Then 
\begin{enumerate}

\item \label{pro:triIhara:3}
$L(X^{\prime \prime})$ is disconnected if and only if $X$ is bipartite.
 \item \label{pro:triIhara:1} $2t^\prime=t_2+t_3,$ where $t^\prime,t_2,t_3$ denotes the number of triangles in $L(X),\ga(X)$ and $L(X^{\prime \prime})$, respectively.
\end{enumerate}
\end{proposition}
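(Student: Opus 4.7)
The plan for Part~\ref{pro:triIhara:3} is to chain two standard facts. The Kronecker double cover $X^{\prime\prime}$ of a connected graph $X$ is connected precisely when $X$ is non-bipartite, and consists of two disjoint copies of $X$ precisely when $X$ is bipartite. Since $X$ is connected with at least one edge, $L(X^{\prime\prime})$ is disconnected iff $X^{\prime\prime}$ is, which in turn happens iff $X$ is bipartite.

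For Part~\ref{pro:triIhara:1}, the plan is first to read off the four $m\times m$ blocks of $A(L(X^{\prime\prime}))$ in terms of $\A,\B,\C,\D$, and then to invoke Theorem~\ref{thm:unionspectra}. Under the labelling fixed before the proposition, the forward edge $e_i$ of $X^{\prime\prime}$ joins $v_{s(e_i)}^\prime$ to $v_{n+t(e_i)}^\prime$, and its partner $e_{m+i}$ joins $v_{t(e_i)}^\prime$ to $v_{n+s(e_i)}^\prime$. Using that $X$ is simple, two distinct forward edges $e_i,e_j$ meet at a vertex of $X^{\prime\prime}$ iff $s(e_i)=s(e_j)$ (i.e.\ $\C_{ij}=1$) or $t(e_i)=t(e_j)$ (i.e.\ $\B_{ij}=1$); hence $\mathbb{P}=\B+\C$. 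The parallel argument for $e_i$ and $e_{m+j}$ gives $\mathbb{Q}=\A+\D$.

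With these identifications, both $A(\ga(X))$ and $A(L(X^{\prime\prime}))$ have the symmetric block form required by Theorem~\ref{thm:unionspectra}, and applying it yields
\begin{align*}
\operatorname{Spec}(A(\ga(X))) &= \operatorname{Spec}(A(L(X)))\cup\operatorname{Spec}(\A+\D-\B-\C),\\
\operatorname{Spec}(A(L(X^{\prime\prime}))) &= \operatorname{Spec}(A(L(X)))\cup\operatorname{Spec}(\B+\C-\A-\D).
\end{align*}
The crucial observation is that the two ``extra'' matrices are negatives of each other and both symmetric, so their eigenvalues appear in negated pairs and the sums of their cubes cancel. Therefore $\operatorname{tr}(A(\ga(X))^3)+\operatorname{tr}(A(L(X^{\prime\prime}))^3)=2\operatorname{tr}(A(L(X))^3)$, and dividing by $6$ (using that the number of triangles in a graph equals $\tfrac16\operatorname{tr}(A^3)$) gives $t_2+t_3=2t'$.

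The main delicate point is the two block identifications $\mathbb{P}=\B+\C$ and $\mathbb{Q}=\A+\D$: one must verify that the strict-inequality clauses such as ``$s(e_i)\ne t(e_j)$'' built into the definitions of $\A,\B,\C,\D$ are automatic once the corresponding equality clause holds, using that $X$ has no loops and no multiple edges. Once that matching is set up, the remainder is a transparent combination of Theorem~\ref{thm:unionspectra} with the eigenvalue identity $\operatorname{tr}(N^3)=\sum_i\lambda_i^3$ for symmetric $N$.
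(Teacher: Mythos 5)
Your proof of Part~\ref{pro:triIhara:3} matches the paper's: both reduce to the standard fact that the Kronecker double cover of a connected graph is connected if and only if the graph is non-bipartite. For Part~\ref{pro:triIhara:1} your argument is correct but takes a genuinely different route. The paper argues combinatorially: $t^\prime=t_1+\sum_i\binom{d_i}{3}$ (triangles of a line graph come from triangles of $X$ and from triples of edges at a common vertex), $t_2=2t_1$ by Proposition~\ref{pro:tri}, and $t_3=2\sum_i\binom{d_i}{3}$ because $X^{\prime\prime}$ is bipartite and hence contributes no triangles of the first kind; adding gives $2t^\prime=t_2+t_3$. You instead identify the blocks $\mathbb{P}=\B+\C$ and $\mathbb{Q}=\A+\D$ --- which is essentially the content of the paper's later Theorem~\ref{thm:eigenvalues}, $A_0-B_0=-(\mathbb{P}-\mathbb{Q})$ --- and then combine Theorem~\ref{thm:unionspectra} with the cancellation $\operatorname{tr}(N^3)+\operatorname{tr}((-N)^3)=0$ and the identity ``triangles $=\tfrac16\operatorname{tr}(A^3)$''. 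Your block identification is sound: the strict-inequality clauses in the definitions of $\A,\B,\C,\D$ are indeed automatic for distinct edges of a simple graph, so $\mathbb{P}+\mathbb{Q}=\A+\B+\C+\D=A(L(X))$ comes out consistently with the paper's Property~\ref{property:M:6}. Your route has the advantage of being independent of Proposition~\ref{pro:tri} and of the line-graph triangle formula, of proving en route the stronger block identity that the paper anyway needs for Theorem~\ref{thm:eigenvalues} and Corollary~\ref{cor:zeta}, and of generalizing verbatim to every odd power (hence to closed walks of any odd length). What the paper's argument buys in exchange is the explicit individual values $t_2=2t_1$ and $t_3=2\sum_i\binom{d_i}{3}$, with no bookkeeping of the edge labelling of $X^{\prime\prime}$.
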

\begin{proof}
Proof of Part \ref{pro:triIhara:3} is easy to follow from the result proved in \cite{imrich2000s}, which discusses that a Kronecker double cover of a graph $X$ is connected if and only if $X$ is connected and non-bipartite.\\

Proof of Part \ref{pro:triIhara:1}. We know from the definition of a line graph that $t^\prime=t_1+\sum_{i}{d_i\choose 3}$. From Proposition \ref{pro:tri}, we know that $2t_1=t_2.$ Since $X^{\prime \prime}$ is bipartite, we have $t_3=2\sum_{i}{d_i\choose 3}.$ Hence $2t^\prime=t_2+t_3$.
\end{proof}

We are now interested to see  the relationship among $\ga(X),L(X^{\prime\prime})$ and $L(X)^{\prime\prime}$ for a  connected graph $X$. We begin with an example.

\begin{figure}[ht!]
\begin{tabular}{cccc}
\begin{tikzpicture}
    \vertex (1) at (0,0) {};
    \vertex (2) at (1,0) {};
    \vertex (3) at (2,0) {};
    \vertex (4) at (3,0.5) {};
    \vertex (5) at (3,-0.5) {};
    ;
    \path[-]
    (1) edge (2)
    (2) edge (3)
    (3) edge (4)
    (3) edge (5)
    ;
\end{tikzpicture}
&
\begin{tikzpicture}
    \vertex (2) at (1,0) {};
    \vertex (3) at (2,0) {};
    \vertex (4) at (3,0.5) {};
    \vertex (5) at (3,-0.5) {};
    ;
    \path[-]
    (2) edge (3)
    (3) edge (4)
    (3) edge (5)
    (4) edge (5)
    ;
\end{tikzpicture}
&
\begin{tikzpicture}[xscale=0.7]
    \vertex (2) at (1,0) {};
    \vertex (3) at (2,0) {};
    \vertex (4) at (3,0.5) {};
    \vertex (5) at (3,-0.5) {};
    \vertex (6) at (4,-0.5) {};
    \vertex (7) at (4,0.5) {};
    \vertex (8) at (5,0) {};
    \vertex (9) at (6,0) {};
    ;
    \path[-]
    (2) edge (3)
    (3) edge (4)
    (3) edge (5)
    (5) edge (6)
    (7) edge (4)
    (6) edge (8)
    (7) edge (8)
    (8) edge (9)
    ;
\end{tikzpicture}
&
\begin{tikzpicture}[xscale=0.7]
     \vertex (1) at (1,0) {};
    \vertex (2) at (1.5,0.5) {};
    \vertex (3) at (1.5,-0.5) {};
    \vertex (4) at (2.5,0.5) {};
    \vertex (5) at (2.5,-0.5) {};
    \vertex (6) at (3.5,0.5) {};
    \vertex (7) at (3.5,-0.5) {};
    \vertex (8) at (4,0) {};
    ;
    \path[-]
    (1) edge (2)
    (1) edge (3)
    (2) edge (4)
    (3) edge (5)
    (4) edge (6)
    (5) edge (7)
    (6) edge (8)
    (7) edge (8)
    (2) edge (3)
    (6) edge (7)
     ;
\end{tikzpicture}
\end{tabular}\\
    \caption{ $X,L(X),L(X)^{\prime\prime}$ and $\ga(L(X))$ (left to right).}
    \label{fig:Ygraph}
\end{figure}

From Figure \ref{fig:Ygraph}, we see that $\ga(X)=L(X)^{\prime\prime}$ and $\ga(L(X))=L(L(X)^{\prime\prime}),$ but it is not true in general,  one can check with $X=C_3.$ In the next theorem we characterize all those graphs which satisfy this property. 

\begin{table}[ht!]
\scalebox{0.8}{
\begin{tabular}{|c|c|c|}
\hline
     $X$ & $X^{\prime\prime}$ & $\ga(X)=L(X^{\prime\prime})$
     \\
     \hline
      \begin{tikzpicture}   
    \vertex (1) at (0,0) {};
    \vertex (2) at (2,0) {};
    \vertex (3) at (2,2){};
    \vertex (4) at (0,2) {};
     \path[-]
    (1) edge (2)
    (2) edge (3)
    (4) edge (3)
    (1) edge (4)
    (4) edge (2)
    (1) edge (3)
    ;
     \end{tikzpicture}
      &
      \begin{tikzpicture}
    \vertex (2) at (2,1) {};
    \vertex (3) at (3,1) {};
    \vertex (4) at (3,2) {};
    \vertex (5) at (3,0) {};
    \vertex (7) at (5,1) {};
    \vertex (8) at (4,1) {};
    \vertex (9) at (4,2) {};
    \vertex (10) at (4,0){};
    \path[-]
    (2) edge (3)
    (2) edge (4)
    (2) edge (5)
    (4) edge (9)
    (4) edge (8)
    (3) edge (9)
    (3) edge (10)
    (5) edge (8)
    (5) edge (10)
    (7) edge (8)
    (7) edge (9)
    (7) edge (10)
    ;
\end{tikzpicture}
     &
     \begin{tikzpicture}[xscale=0.5,yscale=0.4]
    \vertex (1) at (2,3) {};
    \vertex (2) at (3,1) {};
    \vertex (3) at (1,1) {};
    \vertex (4) at (1,2) {};
    \vertex (5) at (3,2) {};
    \vertex (6) at (2,0) {};
    \vertex (7) at (3.75,3.75){};
    \vertex (8) at (4.5,1.5) {};
    \vertex (9) at (4.5,-1) {};
    \vertex (10) at (0.5,-1){};
    \vertex (12) at (0,3.75) {};
    \vertex (11) at (-1,1.5) {};
    \path[-]
    (1) edge (3)
    (3) edge (2)
    (2) edge (1)
    (6) edge (5)
    (4) edge (5)
    (4) edge (6)
    (1) edge (7)
    (5) edge (7)
    (8) edge (5)
    (2) edge (8)
    (2) edge (9)
    (9) edge (6)
    (3) edge (10)
    (10) edge (6)
    (4) edge (12)
    (1) edge (12)
    (4) edge (11)
    (3) edge (11)
    (7) edge (12)
    (7) edge (8)
    (8) edge (9)
    (9) edge (10)
    (10) edge (11)
    (11) edge (12)
    ;
\end{tikzpicture}\\
\hline
   \begin{tikzpicture}[xscale=0.7,yscale=0.7]   
    \vertex (1) at (0,0) {};
    \vertex (2) at (2,0) {};
    \vertex (3) at (2,2){};
    \vertex (4) at (0,2) {};
    \path[-]
    (1) edge (2)
    (2) edge (3)
    (4) edge (3)
    (1) edge (4)
    (1) edge (3)
    ;
     \end{tikzpicture}  
&
\begin{tikzpicture}   
    \vertex (1) at (0,0) {};
    \vertex (2) at (1,0) {};
    \vertex (3) at (2,0) {};
    \vertex (4) at (3,0) {};
    \vertex (6) at (0,1) {};
    \vertex (7) at (1,1) {};
    \vertex (8) at (2,1) {};
    \vertex (9) at (3,1) {};
    \path[-]
    (1) edge (2)
    (2) edge (3)
    (3) edge (4)
    (6) edge (7)
    (7) edge (8)
    (8) edge (9)
    (1) edge (6)
    (2) edge (7)
    (9) edge (4)
    (3) edge (8)
     ;
     \end{tikzpicture}
 &
  \begin{tikzpicture}   
    \vertex (2) at (1,0) {};
    \vertex (3) at (2,0){};
    \vertex (4) at (3,0) {};
    \vertex (7) at (1,2) {};
    \vertex (8) at (2,2){};
    \vertex (9) at (3,2) {};
    \vertex (11) at (1.5,1.5) {};
    \vertex (12) at (3.5,1.5) {};
    \vertex (13) at (0.5,0.5) {};
    \vertex (14) at (2.5,0.5) {};
    \path[-]
    (2) edge (3)
    (3) edge (4)
    (7) edge (8)
    (8) edge (9)
    (9) edge (12)
    (3) edge (13)
    (3) edge (14)
    (8) edge (11)
    (8) edge (12)
    (2) edge (11)
    (7) edge (13)
    (4) edge (12)
    (2) edge (13)
    (14) edge (9)
    (7) edge (11)
    (4) edge (14)
     ;
     \end{tikzpicture}
\\
\hline
\end{tabular}}
 \caption{}
 \label{table:X,X''}
    \end{table}
\begin{theorem}\label{thm:Sec:Main}
  Let $X$ be a connected graph. Then 
  \begin{enumerate}
  \item \label{thm:Sec:Main:1}
  $\ga(X)$ is isomorphic to $L(X)^{\prime\prime}$ if and only if $X$ is bipartite.
      \item \label{thm:Sec:Main:2} $\ga(X)$ is isomorphic to $L(X^{\prime \prime})$ if and only if one of the following is true: 
      \begin{itemize}
      \item $X$ is a path graph.
      \item $X$ is a cycle graph on even vertices.
      \item  $X=K_4,K_4-\{e\}$ or a triangle with a pendant vertex.
  \end{itemize}
  \item \label{thm:Sec:Main:3} $L(X^{\prime \prime})$ is isomorphic to $L(X)^{\prime\prime}$ if and only if $X$ is either a cycle graph or a path graph.
  \end{enumerate}
  \end{theorem}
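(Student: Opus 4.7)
The plan is to prove the three parts in turn, with Part~(\ref{thm:Sec:Main:2}) carrying most of the weight.

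Part~(\ref{thm:Sec:Main:1}) is immediate from Equation~\eqref{eqn:bip}: when $X$ is bipartite, the adjacency matrix of $\ga(X)$ has zero diagonal blocks and $B_0 = A(L(X))$ on the anti-diagonal, which is exactly $A(L(X)^{\prime\prime})$; conversely $L(X)^{\prime\prime}$ is always bipartite, so $\ga(X)$ is bipartite, and Proposition~\ref{pro:conn}(\ref{pro:conn:5}) forces $X$ bipartite. For Part~(\ref{thm:Sec:Main:3}), since $L(X)^{\prime\prime}$ is always bipartite, any isomorphism forces $L(X^{\prime\prime})$ to be bipartite, so $X^{\prime\prime}$ (and hence $X$) has maximum degree at most $2$, whence connected $X$ is a path or cycle. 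The converse is a direct check: $P_n$, $C_{2k}$, $C_{2k+1}$ respectively give $2P_{n-1}$, $2C_{2k}$, $C_{4k+2}$ on both sides.

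For Part~(\ref{thm:Sec:Main:2}), I would split by bipartiteness and $\Delta(X)$. For bipartite $X$, Part~(\ref{thm:Sec:Main:1}) combined with $X^{\prime\prime}=2X$ reduces the condition to $L(X)^{\prime\prime}\cong 2L(X)$, i.e., $L(X)$ bipartite, i.e., $X$ a path or even cycle. For non-bipartite $X$ with $\Delta(X)\leq 2$, $X = C_{2k+1}$, and $\ga(X)=2C_{2k+1}$ is disconnected while $L(X^{\prime\prime})=C_{4k+2}$ is connected by Proposition~\ref{pro:triIhara}(\ref{pro:triIhara:3}). For $\Delta(X)\geq 4$, pick four edges $e_1,\dots,e_4$ incident to a vertex $v$ and oriented into $v$; the set $\{e_1, e_2^{-1}, e_3^{-1}, e_4^{-1}\}$ induces a claw in $\ga(X)$, while $L(X^{\prime\prime})$ is a line graph and hence claw-free.

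The main obstacle is the remaining case $\Delta(X)=3$ with $X$ non-bipartite. Here I would use that an isomorphism preserves, at each vertex, the pair (degree, number of triangles through that vertex). For an edge $e=uv$ of $X$ the corresponding vertex of $\ga(X)$ has degree $d_u+d_v-2$ and lies in $k_e$ triangles, where $k_e$ is the number of $X$-triangles containing $e$; in $L(X^{\prime\prime})$ the corresponding vertex has the same degree but lies in $\binom{d_u-1}{2}+\binom{d_v-1}{2}$ triangles, because $X^{\prime\prime}$ is bipartite and so every triangle of $L(X^{\prime\prime})$ comes from a Krausz clique at a single vertex. Matching the multisets and specialising to $\Delta=3$ yields: every $(3,3)$-edge of $X$ lies in $2$ triangles, every $(3,2)$-edge lies in $1$ triangle, and the number of $(3,1)$-edges equals the number of $(2,2)$-edges lying in a triangle. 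If $X$ has a $(3,3)$-edge $uv$, the first condition forces $N(u)\setminus\{v\}=N(v)\setminus\{u\}$ with two common neighbours, and whether those two are adjacent gives $X=K_4$ or $X=K_4-\{e\}$ (connectedness and degree-count then exhaust $X$). Otherwise, at any degree-$3$ vertex $v$ the second condition forces every degree-$2$ neighbour's other neighbour to lie in $N(v)$; a short case analysis on the degree multiset of $N(v)$ rules out every local structure except $K_{1,3}$ (excluded by non-bipartiteness) and the triangle-with-pendant, and connectedness then prevents multiple such sub-structures from coexisting. The converses for the listed graphs follow from the bipartite reduction (for paths and even cycles) and by direct adjacency-matrix computation (for the three sporadic cases).
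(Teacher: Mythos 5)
Your proposal is correct, and Parts~(\ref{thm:Sec:Main:1}) and~(\ref{thm:Sec:Main:3}) essentially coincide with the paper's treatment (you actually supply the converse of Part~(\ref{thm:Sec:Main:1}) and the degree bound in Part~(\ref{thm:Sec:Main:3}) in more detail than the paper, which leaves both as ``clear''). For Part~(\ref{thm:Sec:Main:2}), however, you take a genuinely different route. The paper first proves an intermediate characterization --- $\ga(X)$ is a line graph of some graph if and only if $|V(X)|\leq 4$ or $X$ is a path or cycle --- using claw-freeness of line graphs together with the crown decomposition of Theorem~\ref{thm:char} (a degree-$4$ vertex forces a crown on $8$ vertices, hence a claw), and then settles the finitely many surviving graphs by inspection of Table~\ref{table:X,X''} and Figure~\ref{fig:Ygraph}. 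You share the claw obstruction for $\Delta(X)\geq 4$ (your explicit induced claw $\{e_1;e_2^{-1},e_3^{-1},e_4^{-1}\}$ is a cleaner form of the same idea) and the connectivity obstruction for odd cycles, but you replace the paper's ``$|V(X)|>4$, add a pendant edge, see the figure'' step in the $\Delta(X)=3$ case by an invariant-matching argument: comparing, vertex by vertex, the pair (degree, number of triangles) in $\ga(X)$ (where the triangle count at an oriented edge $e=uv$ is the number of $X$-triangles on $e$, by the vanishing of the mixed diagonal terms in the proof of Proposition~\ref{pro:tri}) against the same pair in $L(X^{\prime\prime})$ (where, $X^{\prime\prime}$ being triangle-free, it is $\binom{d_u-1}{2}+\binom{d_v-1}{2}$). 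The resulting local conditions --- every $(3,3)$-edge in two triangles, every $(3,2)$-edge in one --- do pin down $K_4$, $K_4-e$ and the triangle with a pendant exactly as you sketch, and your bipartite case correctly reduces to Parts~(\ref{thm:Sec:Main:1}) and~(\ref{thm:Sec:Main:3}). What your approach buys is a self-contained and fully rigorous handling of the $\Delta(X)=3$ case without any appeal to figures or exhaustive small-graph tables; what the paper's approach buys is the standalone (and independently interesting) statement characterizing which symmetric edge graphs are line graphs. Both arguments are valid; yours arguably closes the one genuinely informal step in the published proof.
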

 
 \begin{proof}
 Proof of Part \ref{thm:Sec:Main:1}. If $X$ is bipartite, then by Part  \ref{pro:conn:5} of Proposition \ref{pro:conn}, $\ga(X)$ is bipartite which shows that $$A(\ga(X))=A(L(X)^{\prime\prime})=\begin{bmatrix}
0 & A(L(X))\\
A(L(X)) & 0
\end{bmatrix}.$$

Proof of Part \ref{thm:Sec:Main:2}.
 In order to prove this, we first prove that $\ga(X)$ is a line graph of some graph if and only if $|V(X)|\leq 4$ or $X$ is either a cycle graph or a path graph.
 
Suppose that $\ga(X)$ is a line graph of some graph. Clearly $\Delta (X)\leq 3$, since if any vertex $v$ in $X$ has a degree greater than or equal to 4, then by Theorem \ref{thm:char}, $v$ induces a crown graph on at least $8$ vertices. Hence, $\ga(X)$ cannot be a claw-free graph.
\begin{enumerate}
\item[] \textbf{Case 1:} If $\Delta(X)\leq 2,$ then $X$ is either a cycle graph or a path graph. From Part \ref{Ex:cycle:1} of Example \ref{Ex:cycle} and \ref{ex:path}, it is clear that $\ga(X)$ is a line graph of $2X$.

\item[] \textbf{Case 2:} Let $\Delta(X)=3$ and $|V(X)|>4$. Let $v$ be a vertex of degree $3$ and vertices adjacent to $v$ be $x,y,z.$ Since $|V(X)|>4$, if we add a pendant edge on any of the vertices $x,y,z$, then the graph $\ga(X)$ is not a claw-free graph, which is clear from Figure \ref{fig:Ygraph}.
\end{enumerate}

Conversely, if $X=C_n$ (or $P_n$), then $\ga(X)$ is a line graph of two copies of $C_n$ (or $P_n$). If $X=K_{1,3}$, then by Part \ref{Ex:cycle:2} of Example \ref{Ex:cycle} $\ga(X)$ is $C_6$ which is a line graph of $C_6.$
$\ga(X)$ for other non-isomorphic graphs with $|V(X)|=4$ are described in Table \ref{table:X,X''} and Figure \ref{fig:Ygraph}.

Suppose that $\ga(X)=L(X^{\prime \prime}).$ Then by the above statement, it can be noted that $|V(X)|\leq 4$ or $X=C_n$ or $P_n.$ If $X=C_n$ and $n$ is odd, then $L(X^{\prime \prime})=C_{2n}\neq 2C_n=\ga(X).$ If $X=C_n$($n$ is even) or $P_n$, then $L(X^{\prime \prime})=\ga(X).$ For $X=K_4$ or $K_4-\{e\}$ or a triangle with a pendant vertex, we can see from Table \ref{table:X,X''} and Figure \ref{fig:Ygraph} that $\ga(X)=L(X^{\prime \prime})$. If $X=K_{1,3}$ it can be seen that $L(X^{\prime\prime})\neq\ga(X).$ The converse part of the same is easy to follow.\\

Proof of Part \ref{thm:Sec:Main:3}. Assume that $L(X)^{\prime\prime}=L(X^{\prime \prime}).$ From here it is clear that the degree of each vertex of $X$ is less than or equal to two. Hence, $X$ is either a cycle graph or a path graph. Conversely, if $X=C_k$ and $k$ is odd then $X^{\prime \prime}=C_{2k},$ $L(X^{\prime \prime})=L(X)^{\prime \prime}=C_{2k}.$ If $X=C_k$($k$ is even) or $P_k$ then $X^{\prime \prime}=2X$, the result follows.
\end{proof}

We conclude from Theorem \ref{thm:Sec:Main} that if $X(\neq K_4,K_4-e, C_n,$ or a triangle with a pendant vertex) is non-bipartite then $\ga(X)$, $L(X)^{\prime\prime}$ and $L(X^{\prime\prime})$ are three non-isomorphic double covers of $L(X)$.
We have already seen that for a graph $X$, the spectrum of $A(L(X))$ is contained in the spectrum of $A(\ga(X))$ and $A(L(X^{\prime\prime}))$. An immediate question arises about the remaining eigenvalues that is the eigenvalues given by $A_0-B_0$ and $\mathbb{P}-\mathbb{Q}$. If $X$ is bipartite, then $A_0-B_0=-A(L(X))$ and $\mathbb{P}-\mathbb{Q}=A(L(X))$. If $X$ is non-bipartite we have Theorem \ref{thm:eigenvalues}. We shall discuss an example for further clarity.

\begin{example}\label{Ex:eigenvalues}
Let $X$ be the graph given in Figure \ref{fig:3cover}. For a graph $X^{\prime\prime}$ we will continue to use the labelling defined in Example \ref{Ex:labelling}. The adjacency matrix corresponding to $L(X^{\prime\prime})$ is equal to

\medskip
\par
\noindent
$\begin{bmatrix}
\mathbb{P} & \mathbb{Q}\\
\mathbb{Q} & \mathbb{P}
\end{bmatrix},$ where 
$\mathbb{P}=\begin{bmatrix}
0 & 1 & 1 & 0 & 0 & 0\\
1 & 0 & 1 & 1 & 0 & 0\\
1 & 1 & 0 & 0 & 0 & 0\\
0 & 1 & 0 & 0 & 1 & 0\\
0 & 0 & 0 & 1 & 0 & 0\\
0 & 0 & 0 & 0 & 0 & 0
\end{bmatrix}$ 
and
$\mathbb{Q}=\begin{bmatrix}
0 & 0 & 0 & 1 & 1 & 0\\
0 & 0 & 0 & 0 & 0 & 1\\
0 & 0 & 0 & 0 & 0 & 0\\
1 & 0 & 0 & 0 & 0 & 1\\
1 & 0 & 0 & 0 & 0 & 0\\
0 & 1 & 0 & 1 & 0 & 0
\end{bmatrix}. 
$
\par
\noindent
\medskip
It is easy to see that $\mathbb{P}+\mathbb{Q}=A(L(X))$ and\\ $$\mathbb{P}-\mathbb{Q}=\begin{bmatrix}
0 & 1 & 1 & -1 & -1 & 0\\
1 & 0 & 1 & 1 & 0 & -1\\
1 & 1 & 0 & 0 & 0 & 0\\
-1 & 1 & 0 & 0 & 1 & -1\\
-1 & 0 & 0 & 1 & 0 & 0\\
0 & -1 & 0 & -1 & 0 & 0
\end{bmatrix}.$$

\par
\noindent
Now, we use the upper diagonal entries of matrix $\mathbb{P}-\mathbb{Q}$ to assign an orientation to the graph $X$ such that $A_0-B_0=-(\mathbb{P-Q})$. For example: $(\mathbb{P}-\mathbb{Q})_{14}=-1$. From Example \ref{Ex:labelling}, we see that $e_1$ is an edge between $1^\prime$ and $2^\prime+6^\prime$, and $e_4$ is an edge between $2^\prime$ and $3^\prime+6^\prime.$ Hence in $X$, we put $e_1$ from $1$ to $2$ and $e_4$ from $2$ to $3$. Similarly, we repeat the same process for all of the remaining upper diagonal entries in $\mathbb{P-Q}$ and obtained the oriented graph given in Figure \ref{fig:label2}. It is easy to check that for the graph in Figure \ref{fig:label2}, we have $A_0+B_0=A(L(X))$ and $A_0-B_0=-(\mathbb{P}-\mathbb{Q})$.
     
\begin{figure}[ht!]
\tikzset{every picture/.style={line width=0.75pt}} 
\begin{tikzpicture}[x=0.75pt,y=0.75pt,yscale=-1,xscale=1]

\draw  [fill={rgb, 255:red, 0; green, 0; blue, 0 }  ,fill opacity=1 ] (226.78,125.66) .. controls (226.78,123.53) and (228.4,121.8) .. (230.4,121.8) .. controls (232.4,121.8) and (234.03,123.53) .. (234.03,125.66) .. controls (234.03,127.8) and (232.4,129.52) .. (230.4,129.52) .. controls (228.4,129.52) and (226.78,127.8) .. (226.78,125.66) -- cycle ;
\draw    (230.4,125.66) -- (289.11,188.6) ;
\draw    (230.4,125.66) -- (172.42,195.55) ;
\draw    (172.42,195.55) -- (230.4,194.01) -- (293.09,192.85) ;

\draw  [fill={rgb, 255:red, 0; green, 0; blue, 0 }  ,fill opacity=1 ] (171,195.55) .. controls (171,197.68) and (172.59,199.41) .. (174.56,199.41) .. controls (176.52,199.41) and (178.12,197.68) .. (178.12,195.55) .. controls (178.12,193.42) and (176.52,191.69) .. (174.56,191.69) .. controls (172.59,191.69) and (171,193.42) .. (171,195.55) -- cycle ;

\draw  [fill={rgb, 255:red, 0; green, 0; blue, 0 }  ,fill opacity=1 ] (294.16,191.73) .. controls (294.16,193.88) and (292.55,195.63) .. (290.57,195.63) .. controls (288.58,195.63) and (286.97,193.88) .. (286.97,191.73) .. controls (286.97,189.57) and (288.58,187.83) .. (290.57,187.83) .. controls (292.55,187.83) and (294.16,189.57) .. (294.16,191.73) -- cycle ;

\draw    (94.11,194.78) -- (174.56,195.55) ;
\draw  [fill={rgb, 255:red, 0; green, 0; blue, 0 }  ,fill opacity=1 ] (365,188.99) .. controls (365,191.33) and (363.25,193.23) .. (361.08,193.23) .. controls (358.92,193.23) and (357.17,191.33) .. (357.17,188.99) .. controls (357.17,186.64) and (358.92,184.74) .. (361.08,184.74) .. controls (363.25,184.74) and (365,186.64) .. (365,188.99) -- cycle ;
\draw    (290.57,191.73) -- (357.17,188.99) ;
\draw  [fill={rgb, 255:red, 0; green, 0; blue, 0 }  ,fill opacity=1 ] (221.19,68.52) .. controls (221.19,65.75) and (223.26,63.5) .. (225.82,63.5) .. controls (228.37,63.5) and (230.45,65.75) .. (230.45,68.52) .. controls (230.45,71.29) and (228.37,73.54) .. (225.82,73.54) .. controls (223.26,73.54) and (221.19,71.29) .. (221.19,68.52) -- cycle ;
\draw    (225.82,68.52) -- (227.95,95.16) -- (230.4,125.66) ;
\draw   (278.68,169.91) -- (269.54,167.64) -- (272.14,177.42) ;
\draw   (241.91,147) -- (253.24,150.44) -- (249.31,138.41) ;
\draw   (340.44,182.42) -- (348.98,189.37) -- (340.44,196.32) ;
\draw   (317.32,197.97) -- (307.72,191.67) -- (316.95,184.74) ;
\draw   (261.41,200.18) -- (253.58,193.23) -- (261.41,186.28) ;
\draw   (204.49,187.04) -- (216.56,194.8) -- (204.46,202.5) ;
\draw   (186,172.78) -- (194.9,168.96) -- (195.2,179.34) ;
\draw   (219,146.83) -- (209.79,150.63) -- (209.44,139.95) ;
\draw   (148.22,189.37) -- (157.47,195.36) -- (148.22,201.34) ;
\draw   (233.35,113.68) -- (228.43,101.54) -- (224.46,114.08) ;
\draw   (222.59,83.35) -- (228.17,90.71) -- (232.89,82.67) ;

\draw  [fill={rgb, 255:red, 0; green, 0; blue, 0 }  ,fill opacity=1 ] (88,193.88) .. controls (88,197.45) and (90.74,200.34) .. (94.11,200.34) .. controls (97.48,200.34) and (100.22,197.45) .. (100.22,193.88) .. controls (100.22,190.31) and (97.48,187.41) .. (94.11,187.41) .. controls (90.74,187.41) and (88,190.31) .. (88,193.88) -- cycle ;
\draw   (132.02,201.14) -- (117.44,194.66) -- (131.92,187.92) ;

\draw (193,123) node [anchor=north west][inner sep=0.75pt]   [align=left] {$e_2$};
\draw (170,152) node [anchor=north west][inner sep=0.75pt]   [align=left] {$e_2^{-1}$};
\draw (118,169) node [anchor=north west][inner sep=0.75pt]   [align=left] {$e_6$};
\draw (148,202) node [anchor=north west][inner sep=0.75pt]   [align=left] {$e_6^{-1}$};
\draw (333,162) node [anchor=north west][inner sep=0.75pt]   [align=left] {$e_5$};
\draw (302,201) node [anchor=north west][inner sep=0.75pt]   [align=left] {$e_5^{-1}$};
\draw (253,127) node [anchor=north west][inner sep=0.75pt]   [align=left] {$e_1$};
\draw (278,150) node [anchor=north west][inner sep=0.75pt]   [align=left] {$e_1^{-1}$};
\draw (197,204) node [anchor=north west][inner sep=0.75pt]   [align=left] {$e_4^{-1}$};
\draw (250,204) node [anchor=north west][inner sep=0.75pt]   [align=left] {$e_4$};
\draw (237,101) node [anchor=north west][inner sep=0.75pt]   [align=left] {$e_3$};
\draw (238.45,77.94) node [anchor=north west][inner sep=0.75pt]   [align=left] {$e_3^{-1}$};
\draw (225.78,128.66) node [anchor=north west][inner sep=0.75pt]   [align=left] {1};
\draw (271,177) node [anchor=north west][inner sep=0.75pt]   [align=left] {2};
\draw (183,181) node [anchor=north west][inner sep=0.75pt]   [align=left] {3};
\draw (370,179) node [anchor=north west][inner sep=0.75pt]   [align=left] {4};
\draw (206,59) node [anchor=north west][inner sep=0.75pt]   [align=left] {5};
\draw (87,164) node [anchor=north west][inner sep=0.75pt]   [align=left] {6};
\end{tikzpicture}
\centering
\caption{}
\label{fig:label2}
\end{figure}
\end{example}

Now using the idea of Example \ref{Ex:eigenvalues}, we prove that $A_0-B_0=-(\mathbb{P}-\mathbb{Q})$.
\begin{theorem}\label{thm:eigenvalues}
Let $X$ be a connected graph. Then $A_0-B_0=-(\mathbb{P}-\mathbb{Q})$.
\end{theorem}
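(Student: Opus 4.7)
The plan is to establish the stronger block identities $\mathbb{Q}=A_0$ and $\mathbb{P}=B_0$, after which the theorem follows at once by subtraction. Both pairs of matrices are $m\times m$ matrices indexed by $\{e_1,\dots,e_m\}$, so it suffices to check equality entry by entry.

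First I will unpack what $A_0$ and $B_0$ record. From the definition of $M$, $\A_{ij}=1$ iff $t(e_i)=s(e_j)$ and $s(e_i)\ne t(e_j)$, and using $\D=\A^T$ one gets $\D_{ij}=1$ iff $s(e_i)=t(e_j)$ and $t(e_i)\ne s(e_j)$. Likewise $\B_{ij}=1$ iff $t(e_i)=t(e_j)$ with $s(e_i)\ne s(e_j)$, and $\C_{ij}=1$ iff $s(e_i)=s(e_j)$ with $t(e_i)\ne t(e_j)$. For a simple graph the $\ne$ clauses are automatic once $i\ne j$ (otherwise one would produce a loop or parallel edge), so $(A_0)_{ij}=1$ iff $e_i,e_j$ meet head-to-tail in $X$, i.e.\ $t(e_i)=s(e_j)$ or $s(e_i)=t(e_j)$, and $(B_0)_{ij}=1$ iff they meet head-to-head or tail-to-tail, i.e.\ $t(e_i)=t(e_j)$ or $s(e_i)=s(e_j)$. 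Since $X$ has no loops, these two conditions are mutually exclusive at any off-diagonal entry, and the diagonals of $A_0,B_0$ vanish.

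Next I translate to the $X^{\prime\prime}$ side using the labeling fixed in the paragraph preceding Example \ref{Ex:labelling}. In $X^{\prime\prime}$, the forward edge $e_k$ joins $v_{s(e_k)}'$ in the first partition to $v_{n+t(e_k)}'$ in the second partition, while $e_{m+k}=e_k^{-1}$ joins $v_{t(e_k)}'$ to $v_{n+s(e_k)}'$. Therefore, for $i\ne j$, $\mathbb{P}_{ij}=1$ iff the forward edges $e_i,e_j$ share a vertex of $X^{\prime\prime}$, which holds iff $s(e_i)=s(e_j)$ or $t(e_i)=t(e_j)$ in $X$; this is exactly $(B_0)_{ij}$. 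Similarly, $\mathbb{Q}_{ij}=1$ iff $e_i$ and $e_j^{-1}$ share a vertex of $X^{\prime\prime}$, which holds iff $s(e_i)=t(e_j)$ or $t(e_i)=s(e_j)$; this is exactly $(A_0)_{ij}$. The diagonal case $i=j$ is trivial: $\mathbb{P}_{ii}=0$ and $\mathbb{Q}_{ii}=0$ because the four endpoints of $e_i$ and $e_i^{-1}$ in the bipartite graph $X^{\prime\prime}$ are pairwise distinct (using $s(e_i)\ne t(e_i)$). Combining, $\mathbb{P}=B_0$ and $\mathbb{Q}=A_0$, so $A_0-B_0=\mathbb{Q}-\mathbb{P}=-(\mathbb{P}-\mathbb{Q})$.

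The only real obstacle is keeping the bipartition labels of $X^{\prime\prime}$ straight; once the labeling is in force, the identity holds verbatim with no extra orientation choice. The re-orientation step exhibited in Example \ref{Ex:eigenvalues} is then just a way to read off the labeling from the matrix $\mathbb{P}-\mathbb{Q}$, not an additional hypothesis needed for the proof.
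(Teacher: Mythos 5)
Your argument is correct, and at bottom it performs the same entry-by-entry combinatorial comparison as the paper's proof, but it is organized in a genuinely different way that is worth contrasting. You fix the compatibility between the orientation of $X$ and the labelling of $E(X^{\prime\prime})$ \emph{in advance} (decreeing that $e_k$ joins $v_{s(e_k)}^{\prime}$ to $v_{n+t(e_k)}^{\prime}$, which is consistent with the pairing constraint the paper imposes) and then prove the stronger block identities $\mathbb{Q}=A_0$ and $\mathbb{P}=B_0$, from which the theorem, and also $\mathbb{P}+\mathbb{Q}=A_0+B_0=A(L(X))$, follow by inspection. The paper proceeds in the opposite direction: it takes the labelling of $X^{\prime\prime}$ as given, observes that $A_0-B_0$ and $\mathbb{P}-\mathbb{Q}$ have the same zero pattern, and then \emph{chooses the orientation of $X$ a posteriori} so that $(\mathbb{P}-\mathbb{Q})_{ij}=-1$ forces $(A_0-B_0)_{ij}=1$; this is exactly the re-orientation step illustrated in Example \ref{Ex:eigenvalues}, and the paper leaves the complementary implication ($+1\Rightarrow-1$) implicit. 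The substantive point, which you correctly flag in your closing paragraph, is that the identity is an exact matrix equality only for compatible choices: flipping the orientation of an edge of $X$, or swapping which of the two lifts is called $e_k$, conjugates the relevant difference matrix by a diagonal $\pm1$ matrix. Your version makes that dependence explicit before any computation and delivers both sign implications simultaneously, at the cost of adopting a specific labelling convention that the paper only pins down through its construction.
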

\begin{proof}
If $X$ is bipartite, then we are done. Suppose that $X$ is non-bipartite. It is clear that $A_0-B_0$ and $\mathbb{P}-\mathbb{Q}$ have zero entries at the same positions. Suppose that $(\mathbb{P}-\mathbb{Q})_{ij}=-1$. This implies that edge $e_i$ is adjacent to $e_{m+j}$ in $X^{\prime\prime}$. Let $e_i$ be an edge between vertices $v_a^{\prime}$ and $v_{n+b}^{\prime}$, and $e_{m+j}$ be an edge between vertices $v_a^{\prime}$ and $v_{n+c}^\prime$. In graph $X$, we label the edge from vertex $v_b$ to $v_a$ as $e_i$ and the edge from $v_a$ to $v_c$ as $e_j$. This shows that $(A_0-B_0)_{ij}=1$. 
\end{proof}
Recall that two graphs of the same order are called {\em equienergetic (resp., cospectral)} if they have the same energy (resp., spectrum). In \cite{balakrishnan2004energy} Balakrishnan showed that for any integer $k\geq 3$, there exist two equienergetic graphs of order $4k$ that are not cospectral. Let $X$ be a graph on $m$ edges where $m\geq5$. Then from Theorem \ref{thm:eigenvalues}, we see that $\ga(X)$ and $L(X^{\prime\prime})$ are equienergetic graphs of order $2m$ that are not cospectral. We exclude the graphs given in Part \ref{thm:Sec:Main:2} of Theorem \ref{thm:Sec:Main}.
Using Theorem \ref{thm:eigenvalues}, we will provide a relation between the zeta function of $\ga(X)$ and $L(X^{\prime\prime})$ in Corollary \ref{cor:zeta}. Consequently, we obtain that the zeta function of $L(X)$ divides the zeta function of $\ga(X),L(X^{\prime\prime})$ and $L(X)^{\prime\prime}$. The Kronecker product of  matrices $A=[a_{ij}]$ and $B$ is defined to be the partitioned matrix $[a_{ij}B]$ and is denoted by $A\otimes B$.

\begin{corollary}\label{cor:zeta}
Let $X$ be a connected graph with $m$ edges. Then 
\begin{align*}
 \ze_{\ga(X)}^{-1}(u)&=\ze_{L(X)}^{-1}(u)g(u),\\
 \ze_{L(X^{\prime\prime})}^{-1}(u)&=\ze_{L(X)}^{-1}(u)g(-u),\\
 \ze_{L(X)^{\prime\prime}}^{-1}(u)&=\ze_{L(X)}^{-1}(u)\ze_{L(X)}^{-1}(-u),
\end{align*}
where $g(u)=(1-u^2)^{|E(L(X))|-|V(L(X))|}det(\I_{m}-(A_0-B_0)u+Q(L(X))u^2)$.
\end{corollary}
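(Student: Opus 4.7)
The plan is to apply Hashimoto's formula (Theorem \ref{thm:AQ}) to each of $\ga(X)$, $L(X^{\prime\prime})$, and $L(X)^{\prime\prime}$, exploit the $2\times 2$ block symmetry of their adjacency matrices, and invoke Theorem \ref{thm:unionspectra} (applied to the characteristic-type matrix $I - Au + Qu^2$) to split each determinant as a product of two smaller determinants. The key structural observation that makes this work is that every double cover $Y$ of $L(X)$ preserves vertex degrees, so
\[
Q(Y) = \begin{pmatrix} Q(L(X)) & 0 \\ 0 & Q(L(X)) \end{pmatrix}, \qquad |E(Y)|-|V(Y)| = 2\bigl(|E(L(X))|-|V(L(X))|\bigr),
\]
which is what makes the prefactor $(1-u^2)^{r-1}$ split cleanly as $(1-u^2)^{|E(L(X))|-|V(L(X))|}\cdot(1-u^2)^{|E(L(X))|-|V(L(X))|}$.

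For $\ga(X)$, with $A(\ga(X)) = \left(\begin{smallmatrix} A_0 & B_0 \\ B_0 & A_0 \end{smallmatrix}\right)$ and $C(u)=I-A_0 u + Q(L(X))u^2$, the matrix $I - A(\ga(X))u + Q(\ga(X))u^2$ has the form $\left(\begin{smallmatrix} C(u) & -B_0 u \\ -B_0 u & C(u) \end{smallmatrix}\right)$, whose determinant Theorem \ref{thm:unionspectra} factors as $\det(C(u)-B_0 u)\det(C(u)+B_0 u)$. Since $A_0+B_0 = A(L(X))$ by Property \ref{property:M:6}, combining this factorization with the prefactor split gives $\ze_{\ga(X)}^{-1}(u) = \ze_{L(X)}^{-1}(u)\cdot g(u)$, which is the first identity. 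The same block argument applied to $A(L(X^{\prime\prime})) = \left(\begin{smallmatrix} \mathbb{P} & \mathbb{Q} \\ \mathbb{Q} & \mathbb{P} \end{smallmatrix}\right)$ yields the factor $\det(I-(\mathbb{P}+\mathbb{Q})u+Qu^2)\det(I-(\mathbb{P}-\mathbb{Q})u+Qu^2)$; the first piece produces $\ze_{L(X)}^{-1}(u)$ because $\mathbb{P}+\mathbb{Q}=A(L(X))$. For the second piece I invoke Theorem \ref{thm:eigenvalues}, which supplies $\mathbb{P}-\mathbb{Q}=-(A_0-B_0)$, so that
\[
\det\bigl(I-(\mathbb{P}-\mathbb{Q})u+Q(L(X))u^2\bigr) = \det\bigl(I-(A_0-B_0)(-u)+Q(L(X))(-u)^2\bigr),
\]
and since $1-u^2 = 1-(-u)^2$ the full complementary factor is exactly $g(-u)$.

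Finally, $L(X)^{\prime\prime}$ has adjacency matrix $\left(\begin{smallmatrix} 0 & A(L(X)) \\ A(L(X)) & 0 \end{smallmatrix}\right)$, so the same block-symmetric determinant formula produces $\det(I-A(L(X))u+Qu^2)\det(I+A(L(X))u+Qu^2)$; by the same $u\mapsto -u$ substitution as above and the degree-doubling of the prefactor, this is $\ze_{L(X)}^{-1}(u)\cdot \ze_{L(X)}^{-1}(-u)$. The computation is essentially linear-algebraic bookkeeping; the one genuinely non-routine input is Theorem \ref{thm:eigenvalues}, since without the identification $\mathbb{P}-\mathbb{Q}=-(A_0-B_0)$ the zeta function of $L(X^{\prime\prime})$ would not be expressible through the same polynomial $g$. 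The only other point requiring a moment of care is verifying that $Q(Y)$ really is block-diagonal with two identical $Q(L(X))$ blocks, but this is immediate from the definition of a covering map.
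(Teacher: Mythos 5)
Your proposal is correct and follows essentially the same route as the paper: the paper block-diagonalizes $I-A u+Qu^2$ by conjugating with $P=\left(\begin{smallmatrix}\I_m & \I_m\\ \I_m & -\I_m\end{smallmatrix}\right)$, which is exactly the determinant factorization $\det(C(u)-B_0u)\det(C(u)+B_0u)$ you extract from the block symmetry, and both arguments hinge on Theorem \ref{thm:eigenvalues} to identify the complementary factor for $L(X^{\prime\prime})$ as $g(-u)$. Your explicit remarks on the degree-preservation of the cover (so that $Q$ is block-diagonal and the $(1-u^2)$ exponent doubles) are correct and merely make explicit what the paper leaves implicit.
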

\begin{proof}
Let $P=\begin{bmatrix}
\I_m & \I_m\\
\I_m & -\I_m
\end{bmatrix}$. Then $PA(\ga(X))P^{-1}=\begin{bmatrix}
C_0 & 0\\
0 & D_0
\end{bmatrix}$, where $A_0+B_0=C_0$ and $D_0=A_0-B_0$ and $PQ(\ga(X))P^{-1}=Q(L(X))\otimes \I_2$. Let $s=|E(\ga(X))|-|V(\ga(X))|.$
From Equation \ref{eqn:1} we have,
\begin{align*}
\ze_{\ga(X)}^{-1}(u)&=(1-u^2)^{s} det(\I_{2m}-A(\ga(X))u+Q(\ga(X))u^2)\\
&=(1-u^2)^{s} det(P(\I_{2m}-A(\ga(X))u+Q(\ga(X))u^2)P^{-1})\\
&=(1-u^2)^{s} det(\I_{2m}-PA(\ga(X))P^{-1}u+Q(\ga(X))u^2)\\
&=(1-u^2)^{s} det(\I_{m}-C_0u+Q(L(X))u^2)det(I_{m}-D_0u+Q(L(X))u^2)\\
&=\ze_{L(X)}^{-1}(u)g(u).
\end{align*}

Similarly, we can see that 
$$\ze_{L(X^{\prime\prime})}^{-1}(u)=\ze_{L(X)}^{-1}(u)(1-u^2)^{|E(L(X))|-|V(L(X))|}det(\I_{m}-(\mathbb{P}-\mathbb{Q})u+Q(L(X))u^2).$$

\par
\noindent
By Theorem \ref{thm:eigenvalues}, we obtain $\ze_{L(X^{\prime\prime})}^{-1}(u)=\ze_{L(X)}^{-1}(u)g(-u)$. As
$A(L(X)^{\prime\prime})=\begin{bmatrix}
0 & A(L(X))\\
A(L(X)) & 0
\end{bmatrix},$ we use the above technique which provides \linebreak $\ze_{L(X)^{\prime\prime}}^{-1}(u)=\ze_{L(X)}^{-1}(u)\ze_{L(X)}^{-1}(-u)$.
This completes the proof.
\end{proof}
From the above corollary, we conclude that if $X$ is bipartite, then 
$\ze_{\ga(X)}^{-1}(u)= \ze_{L(X)}^{-1}(u)\ze_{L(X)}^{-1}(-u)$ and $\ze_{L(X^{\prime \prime})}^{-1}(u)= (\ze_{L(X)}^{-1}(u))^2.$

\bibliographystyle{amsplain}
\bibliography{bibliography}

\begin{table}[ht!]
    \centering
    \scalebox{0.7}{
    \begin{tabular}{|c|c||c|c|} 
    \hline
    $X$ & $\ga(X)$ & $X$ & $\ga(X)$\\
    \hline
    \begin{tikzpicture}    
    \vertex (1) at (1,1) {};
    \vertex (2) at (2,1) {};
    \vertex (3) at (3,2) {};
    \vertex (4) at (3,0) {};
    \vertex (5) at (4,2) {};
    \vertex (6) at (4,0) {};
    \path[-]
    (1) edge (2)
    (2) edge (3)
    (2) edge (4)
    (3) edge (5)
    (4) edge (6)
     ;
     \end{tikzpicture}
     &
    \begin{tikzpicture}[xscale=0.6,yscale=0.6]
    \vertex (1) at (1,2) {};
    \vertex (2) at (2,1) {};
    \vertex (3) at (3,2) {};
    \vertex (4) at (4,1) {};
    \vertex (5) at (5,2) {};
    \vertex (6) at (1,-1){};
    \vertex (7) at (2,0) {};
    \vertex (8) at (3,-1){};
    \vertex (9) at (4,0) {};
    \vertex (10) at (5,-1){};
     \path[-]
    (1) edge (2)
    (2) edge (3)
    (3) edge (4)
    (4) edge (5)
    (6) edge (7)
    (7) edge (8)
    (8) edge (9)
    (9) edge (10)
    (2) edge (7)
    (4) edge (9)
;
\end{tikzpicture}
&
\begin{tikzpicture}   
    \vertex (1) at (1,1) {};
    \vertex (2) at (2,1) {};
    \vertex (3) at (1.5,2){};
    \vertex (4) at (1,0) {};
    \vertex (5) at (2,0) {};
    \path[-]
    (1) edge (2)
    (2) edge (3)
    (1) edge (3)
    (1) edge (4)
    (2) edge (5)
    (4) edge (5)
     ;
     \end{tikzpicture}
&
\begin{tikzpicture}[yscale=0.5]
    \vertex (1) at (1,0) {};
    \vertex (2) at (2,0) {};
    \vertex (3) at (3,0) {};
    \vertex (4) at (4,0) {};
    \vertex (5) at (1.5,-1) {};
    \vertex (6) at (3.5,-1) {};
    \vertex (7) at (3,-3) {};
    \vertex (8) at (2,-2) {};
    \vertex (9) at (1.5,-4) {};
    \vertex (10) at (3.5,-4){};
    \vertex (11) at (4,-2) {};
    \vertex (12) at (5,-4) {};
    \path[-]
    (1) edge (2)
    (2) edge (3)
    (3) edge (4)
    (3) edge (5)
    (3) edge (6)
    (8) edge (9)
    (7) edge (8)
    (7) edge (9)
    (7) edge (10)
    (10) edge (12)
    (11) edge (12)
    (7) edge (11)
    (4) edge (11)
    (6) edge (10)
    (2) edge (8)
    (9) edge (5)
    (4) edge (6)
    (1) edge (5)
    ;
\end{tikzpicture}
    \\
\hline
\begin{tikzpicture}
    \vertex (1) at (1,1) {};
    \vertex (2) at (2,1) {};
    \vertex (3) at (3,1) {};
    \vertex (4) at (4,2) {};
    \vertex (5) at (4,0) {};
    \vertex (6) at (4,1) {};
    \path[-]
    (1) edge (2)
    (2) edge (3)
    (3) edge (4)
    (3) edge (5)
    (3) edge (6)
     ;
\end{tikzpicture}&
\begin{tikzpicture}
    \vertex (1) at (1,1) {};
    \vertex (2) at (2,1) {};
    \vertex (3) at (3,1) {};
    \vertex (4) at (3,2) {};
    \vertex (5) at (3,0) {};
    \vertex (6) at (6,1) {};
    \vertex (7) at (5,1) {};
    \vertex (8) at (4,1) {};
    \vertex (9) at (4,2) {};
    \vertex (10) at (4,0){};
    \path[-]
    (1) edge (2)
    (2) edge (3)
    (2) edge (4)
    (2) edge (5)
    (4) edge (9)
    (4) edge (8)
    (3) edge (9)
    (3) edge (10)
    (5) edge (8)
    (5) edge (10)
    (7) edge (8)
    (7) edge (9)
    (7) edge (10)
    (7) edge (6)
    ;
\end{tikzpicture}
&
\begin{tikzpicture}   
    \vertex (1) at (0,0) {};
    \vertex (2) at (1,0) {};
    \vertex (3) at (1,1){};
    \vertex (4) at (0,1) {};
    \vertex (5) at (1.5,1.5) {};
    \path[-]
    (1) edge (2)
    (2) edge (3)
    (4) edge (3)
    (1) edge (4)
    (3) edge (5)
     ;
     \end{tikzpicture}
&
\begin{tikzpicture}   
    \vertex (1) at (0,0) {};
    \vertex (2) at (1,0) {};
    \vertex (3) at (2,0){};
    \vertex (4) at (3,0) {};
    \vertex (5) at (4,0) {};
    \vertex (6) at (0,1) {};
    \vertex (7) at (1,1) {};
    \vertex (8) at (2,1){};
    \vertex (9) at (3,1) {};
    \vertex (10) at (4,1) {};
    \path[-]
    (1) edge (2)
    (2) edge (3)
    (3) edge (4)
    (5) edge (4)
    (6) edge (7)
    (7) edge (8)
    (8) edge (9)
    (9) edge (10)
    (1) edge (6)
    (2) edge (7)
    (10) edge (5)
    (9) edge (4)
     ;
     \end{tikzpicture}
\\
\hline

\begin{tikzpicture}
    \vertex (1) at (1,1) {};
    \vertex (2) at (2,1) {};
    \vertex (3) at (3,1) {};
    \vertex (4) at (4,2) {};
    \vertex (5) at (4,0) {};
    \vertex (6) at (0,2) {};
    \vertex (7) at (0,0) {};
    \path[-]
    (1) edge (2)
    (2) edge (3)
    (3) edge (4)
    (3) edge (5)
    (1) edge (6)
    (1) edge (7)
    ;
\end{tikzpicture}
&
\begin{tikzpicture}[xscale=0.6,yscale=0.6]
    \vertex (1) at (0,0) {};
    \vertex (2) at (1,1) {};
    \vertex (3) at (1,0) {};
    \vertex (4) at (2,1) {};
    \vertex (5) at (2,0) {};
    \vertex (6) at (3,0) {};
    \vertex (7) at (0,-1) {};
    \vertex (8) at (1,-2) {};
    \vertex (9) at (1,-1) {};
    \vertex (10) at (2,-2) {};
    \vertex (11) at (2,-1) {};
    \vertex (12) at (3,-1) {};
    \path[-]
    (1) edge (2)
    (2) edge (3)
    (2) edge (4)
    (4) edge (5)
    (4) edge (6)
    (7) edge (8)
    (8) edge (9)
    (8) edge (10)
    (10) edge (11)
    (10) edge (12)
    (1) edge (9)
    (3) edge (7)
    (5) edge (12)
    (6) edge (11)
     ;
\end{tikzpicture}
&
     \begin{tikzpicture}   
    \vertex (1) at (0,0) {};
    \vertex (2) at (2,0) {};
    \vertex (3) at (2,1) {};
    \vertex (4) at (2,2) {};
    \vertex (5) at (0,2) {};
    \vertex (6) at (0,1) {};
    \path[-]
    (1) edge (2)
    (2) edge (3)
    (4) edge (3)
    (5) edge (4)
    (6) edge (5)
    (3) edge (6)
    (1) edge (6)
     ;
     \end{tikzpicture}
     &
     \begin{tikzpicture} [yscale=0.8]  
    \vertex (1) at (0,0) {};
    \vertex (2) at (1,0) {};
    \vertex (3) at (2,0){};
    \vertex (4) at (3,0) {};
    \vertex (5) at (4,0) {};
    \vertex (6) at (0,2) {};
    \vertex (7) at (1,2) {};
    \vertex (8) at (2,2){};
    \vertex (9) at (3,2) {};
    \vertex (10) at (4,2) {};
    \vertex (11) at (1.5,1.5) {};
    \vertex (12) at (3.5,1.5) {};
    \vertex (13) at (0.5,0.5) {};
    \vertex (14) at (2.5,0.5) {};
    \path[-]
    (1) edge (2)
    (2) edge (3)
    (3) edge (4)
    (5) edge (4)
    (6) edge (7)
    (7) edge (8)
    (8) edge (9)
    (9) edge (10)
    (3) edge (13)
    (3) edge (14)
    (8) edge (11)
    (8) edge (12)
    (6) edge (11)
    (10)edge (12)
    (1) edge (13)
    (5) edge (14)
    (2) edge (11)
    (7) edge (13)
    (9) edge (14)
    (4) edge (12)
     ;
     \end{tikzpicture}
    \\
\hline
\begin{tikzpicture}   
    \vertex (1) at (1,1) {};
    \vertex (2) at (2,1) {};
    \vertex (3) at (1.5,2) {};
    \vertex (4) at (1,3) {};
    \vertex (5) at (1.5,3) {};
    \path[-]
    (1) edge (2)
    (2) edge (3)
    (1) edge (3)
    (3) edge (4)
    (3) edge (5)
     ;
     \end{tikzpicture}
&
\begin{tikzpicture}
    \vertex (1) at (1,1) {};
    \vertex (2) at (2,1) {};
    \vertex (3) at (3,1) {};
    \vertex (4) at (3,2) {};
    \vertex (5) at (3,0) {};
    \vertex (6) at (6,1) {};
    \vertex (7) at (5,1) {};
    \vertex (8) at (4,1) {};
    \vertex (9) at (4,2) {};
    \vertex (10) at(4,0){};
    \path[-]
    (1) edge (2)
    (2) edge (3)
    (2) edge (4)
    (2) edge (5)
    (4) edge (9)
    (4) edge (8)
    (3) edge (9)
    (3) edge (10)
    (5) edge (8)
    (5) edge (10)
    (7) edge (8)
    (7) edge (9)
    (7) edge (10)
    (7) edge (6)
    (1) edge (5)
    (6) edge (9)
    ;
\end{tikzpicture}
 &
      \begin{tikzpicture}[yscale=0.7]
    \vertex (1) at (2,3) {};
    \vertex (2) at (2,2) {};
    \vertex (3) at (3,2) {};
    \vertex (4) at (4,2) {};
    \vertex (5) at (5,2) {};
    \vertex (6) at (2.5,1) {};
    \vertex (7) at (3.2,1){};
    \vertex (8) at (3.7,1) {};
    \path[-]
    (1) edge (2)
    (2) edge (3)
    (3) edge (4)
    (4) edge (5)
    (3) edge (6)
    (3) edge (7)
    (4) edge (8)
    ;
\end{tikzpicture}
&
\begin{tikzpicture}[yscale=0.6]
    \vertex (1) at (1,2) {};
    \vertex (2) at (2,2) {};
    \vertex (3) at (4,2) {};
    \vertex (4) at (1.5,1) {};
    \vertex (5) at (2.5,1) {};
    \vertex (7) at (4,1) {};
    \vertex (8) at (5,1) {};
    \vertex (1') at (1,-2) {};
    \vertex (2') at (2,-2) {};
    \vertex (3') at (4,-2) {};
    \vertex (4') at (1.5,-1) {};
    \vertex (5') at (2.5,-1) {};
    \vertex (7') at (4,-1) {};
    \vertex (8') at (5,-1) {};
    \path[-]
    (1) edge (2)
    (2) edge (3)
    (2) edge (4)
    (2) edge (5)
    (3) edge (7)
    (3) edge (8)
    (1') edge (2')
    (2') edge (3')
    (2') edge (4')
    (2') edge (5')
    (3') edge (7')
    (3') edge (8')
    (4) edge (5')
    (4) edge (3')
    (5) edge (4')
    (5) edge (3')
    (3) edge (4')
    (3) edge (5')
    (7) edge (8')
    (8) edge (7')
    ;
\end{tikzpicture}
\\
\hline
\begin{tikzpicture}   
    \vertex (1) at (1,1) {};
    \vertex (2) at (2,1) {};
    \vertex (3) at (1.5,2) {};
    \vertex (6) at (0,1) {};
     \vertex (5) at (3,1) {};
     \path[-]
    (1) edge (2)
    (2) edge (3)
    (1) edge (3)
    (1) edge (6)
    (2) edge (5)
    ;
     \end{tikzpicture}
&
\begin{tikzpicture}   
    \vertex (2) at (1,0) {};
    \vertex (3) at (2,0){};
    \vertex (4) at (3,0) {};
    \vertex (7) at (1,2) {};
    \vertex (8) at (2,2){};
    \vertex (9) at (3,2) {};
    \vertex (11) at (1.5,1.5) {};
    \vertex (12) at (3.5,1.5) {};
    \vertex (13) at (0.5,0.5) {};
    \vertex (14) at (2.5,0.5) {};
    \path[-]
    (2) edge (3)
    (3) edge (4)
    (7) edge (8)
    (8) edge (9)
    (9) edge (12)
    (3) edge (13)
    (3) edge (14)
    (8) edge (11)
    (8) edge (12)
    (2) edge (11)
    (7) edge (13)
    (4) edge (12)
    (2) edge (13)
    (14) edge (9)
     ;
     \end{tikzpicture}
&
\begin{tikzpicture}
    \vertex (1) at (1,0) {};
    \vertex (2) at (2,0) {};
    \vertex (3) at (3,0) {};
    \vertex (4) at (4,0) {};
    \vertex (5) at (1.5,1) {};
    \vertex (6) at (3.5,1) {};
    \vertex (7) at (2.5,0) {};
    \vertex (8) at (2.5,1) {};
    \vertex (9) at (1.5,1.5) {};
    \vertex (10) at (3.5,1.5) {};
    \path[-]
    (1) edge (2)
    (3) edge (4)
    (1) edge (5)
    (2) edge (5)
    (4) edge (6)
    (3) edge (6)
    (7) edge (8)
    (8) edge (9)
    (8) edge (10)
    (9) edge (10)
    (2) edge (7)
    (7) edge (3)
    ;
\end{tikzpicture}
&
 \begin{tikzpicture}[xscale=0.5,yscale=0.5]
    \vertex (1) at (-2,2) {};
    \vertex (2) at (-1,2) {};
    \vertex (3) at (1,2) {};
    \vertex (4) at (2,2) {};
    \vertex (5) at (-2,-2) {};
    \vertex (6) at (-1,-2) {};
    \vertex (7) at (1,-2) {};
    \vertex (8) at (2,-2){};
    \vertex (9) at (0,1){};
    \vertex (10) at (0,0) {};
    \vertex (11) at (-1.5,3){};
    \vertex (12) at (1.5,3) {};
    \vertex (13) at (-1.5,-3){};
    \vertex (14) at (1.5,-3){};
   \vertex (15) at (-0.5,-1){};
    \vertex (16) at (-1,-0.5){};
    \vertex (17) at (0.5,-1){};
    \vertex (18) at (1,-0.5){};
     \vertex (19) at (-2,-0.5){};
    \vertex (20) at (-2,-1){};
    \vertex (21) at (-3,-0.75){};
    \vertex (22) at (2,-1){};
    \vertex (23) at (2,-0.5){};
    \vertex (24) at (3,-0.75){};
    \path[-]
    (1) edge (2)
    (2) edge (9)
    (3) edge (9)
    (3) edge (4)
    (4) edge (10)
    (1) edge (10)
    (10) edge (17)
    (9) edge (16)
    (5) edge (6)
    (7) edge (8)
    (9) edge (18)
   (10) edge (15)
   (1) edge (11)
   (2) edge (11)
   (3) edge (12)
   (4) edge (12)
   (5) edge (13)
   (6) edge (13)
   (7) edge (14)
   (8) edge (14)
   (16) edge (5)
   (6) edge (15)
   (17) edge (7)
   (18) edge (8)
   (16) edge (17)
   (15) edge (18)
   (16) edge (19)
   (15) edge (20)
   (19) edge (20)
   (19) edge (21)
   (21) edge (20)
   (17) edge (22)
   (18) edge (23)
   (22) edge (23)
   (23) edge (24)
   (22) edge (24)
   
   ;
\end{tikzpicture}
\\

\hline
\begin{tikzpicture}   
    \vertex (1) at (1,1) {};
    \vertex (2) at (2,1) {};
    \vertex (3) at (1.5,2) {};
    \vertex (6) at (0,1) {};
    \vertex (4) at (1.5,3) {};
    \vertex (5) at (3,1) {};
     \path[-]
    (1) edge (2)
    (2) edge (3)
    (1) edge (3)
    (1) edge (6)
    (2) edge (5)
    (3) edge (4)
     ;
     \end{tikzpicture}
&
\begin{tikzpicture}[xscale=0.5,yscale=0.5]
    \vertex (1) at (2,3) {};
    \vertex (2) at (3,1) {};
    \vertex (3) at (1,1) {};
    \vertex (4) at (1,2) {};
    \vertex (5) at (3,2) {};
    \vertex (6) at (2,0) {};
    \vertex (7) at (3.75,3.75) {};
    \vertex (8) at (4.5,1.5) {};
    \vertex (9) at (4.5,-1) {};
    \vertex (10) at (0.5,-1){};
    \vertex (12) at (0,3.75) {};
    \vertex (11) at (-1,1.5) {};
    \path[-]
    (1) edge (3)
    (3) edge (2)
    (2) edge (1)
    (6) edge (5)
    (4) edge (5)
    (4) edge (6)
    (1) edge (7)
    (5) edge (7)
    (8) edge (5)
    (2) edge (8)
    (2) edge (9)
    (9) edge (6)
    (3) edge (10)
    (10) edge (6)
    (4) edge (12)
    (1) edge (12)
    (4) edge (11)
    (3) edge (11)
    ;
\end{tikzpicture}
&
\begin{tikzpicture}
    \vertex (1) at (1,0) {};
    \vertex (2) at (2,0) {};
    \vertex (3) at (3,0) {};
    \vertex (4) at (4,0) {};
    \vertex (5) at (1.5,1) {};
    \vertex (6) at (3.5,1) {};
    \path[-]
    (1) edge (2)
    (2) edge (3)
    (3) edge (4)
    (1) edge (5)
    (2) edge (5)
    (4) edge (6)
    (3) edge (6)
    ;
\end{tikzpicture}
&
    \begin{tikzpicture}[xscale=0.5,yscale=0.5]
    \vertex (1) at (-2,2) {};
    \vertex (2) at (-1,2) {};
    \vertex (3) at (1,2) {};
    \vertex (4) at (2,2) {};
    \vertex (5) at (-2,-2) {};
    \vertex (6) at (-1,-2) {};
    \vertex (7) at (1,-2) {};
    \vertex (8) at (2,-2) {};
    \vertex (9) at (0,1){};
    \vertex (10) at (0,0) {};
    \vertex (11) at (-1.5,3) {};
    \vertex (12) at (1.5,3) {};
    \vertex (13) at (-1.5,-3) {};
    \vertex (14) at (1.5,-3) {};
   \path[-]
    (1) edge (2)
    (2) edge (9)
    (3) edge (9)
    (3) edge (4)
    (4) edge (10)
    (1) edge (10)
    (10) edge (7)
    (9) edge (5)
    (5) edge (6)
    (7) edge (8)
    (9) edge (8)
   (10) edge (6)
   (1) edge (11)
   (2) edge (11)
   (3) edge (12)
   (4) edge (12)
   (5) edge (13)
   (6) edge (13)
   (7) edge (14)
   (8) edge (14)
    ;
\end{tikzpicture}
\\
\hline
\begin{tikzpicture}   
    \vertex (1) at (1,1) {};
    \vertex (2) at (2,1) {};
    \vertex (3) at (1.5,2) {};
    \vertex (4) at (2,3) {};
    \vertex (5) at (1,3) {};
     \path[-]
    (1) edge (2)
    (2) edge (3)
    (1) edge (3)
    (3) edge (4)
    (3) edge (5)
    (5) edge (4)
     ;
     \end{tikzpicture}
&
\begin{tikzpicture}
    \vertex (1) at (1,2) {};
    \vertex (2) at (2,1) {};
    \vertex (3) at (3,1) {};
    \vertex (4) at (3,2) {};
    \vertex (5) at (3,0) {};
    \vertex (6) at (6,0) {};
    \vertex (7) at (5,1) {};
    \vertex (8) at (4,1) {};
    \vertex (9) at (4,2) {};
    \vertex (10) at (4,0){};
    \vertex (11) at (3.5,3) {};
    \vertex (12) at (3.5,-1) {};
    \path[-]
    (1) edge (2)
    (2) edge (3)
    (2) edge (4)
    (2) edge (5)
    (4) edge (9)
    (4) edge (8)
    (3) edge (9)
    (3) edge (10)
    (5) edge (8)
    (5) edge (10)
    (7) edge (8)
    (7) edge (9)
    (7) edge (10)
    (7) edge (6)
    (1) edge (3)
    (6) edge (8)
    (11) edge (4)
    (9) edge (11)
    (12) edge (5)
    (12) edge (10)
    ;
\end{tikzpicture}
&
\begin{tikzpicture}   
    \vertex (1) at (1,1) {};
    \vertex (2) at (2,1) {};
    \vertex (3) at (1.5,2) {};
    \vertex (4) at (2.5,1) {};
    \vertex (5) at (3.5,1) {};
    \vertex (6) at (2.5,2) {};
    \vertex (7) at (3.5,2) {};
    \path[-]
    (1) edge (2)
    (2) edge (3)
    (1) edge (3)
    (2) edge (4)
    (4) edge (5)
    (5) edge (7)
    (6) edge (7)
    (4) edge (6)
     ;
     \end{tikzpicture}
     &
     \begin{tikzpicture}[xscale=0.5,yscale=0.5]
    \vertex (1) at (-2,2) {};
    \vertex (2) at (-1,2) {};
    \vertex (3) at (1,2) {};
    \vertex (4) at (2,2) {};
    \vertex (5) at (-2,-2) {};
    \vertex (6) at (-1,-2) {};
    \vertex (7) at (1,-2) {};
    \vertex (8) at (2,-2) {};
    \vertex (9) at (0,1){};
    \vertex (10) at (0,0) {};
    \vertex (11) at (-2,3) {};
    \vertex (12) at (1,3) {};
    \vertex (13) at (-1.5,-3) {};
    \vertex (14) at (1.5,-3) {};
    \vertex (15) at (-1,3) {};
    \vertex (16) at (2,3) {};
   \path[-]
    (1) edge (2)
    (2) edge (9)
    (3) edge (9)
    (3) edge (4)
    (4) edge (10)
    (1) edge (10)
    (10) edge (7)
    (9) edge (5)
    (5) edge (6)
    (7) edge (8)
    (9) edge (8)
   (10) edge (6)
   (1) edge (11)
   (2) edge (15)
   (3) edge (12)
   (4) edge (16)
   (12) edge (16)
   (5) edge (13)
   (6) edge (13)
   (7) edge (14)
   (8) edge (14)
   (11) edge (15)
    ;
\end{tikzpicture}
\\
\hline
\begin{tikzpicture}    
    \vertex (1) at (3,1) {};
    \vertex (2) at (2,1) {};
    \vertex (3) at (3,2) {};
    \vertex (4) at (3,0) {};
    \vertex (5) at (4,2) {};
    \vertex (6) at (4,0) {};
    \vertex (7) at (4,1) {};
    \vertex (8) at (5,2) {};
    \vertex (9) at (5,1) {};
    \vertex (10) at (5,0) {};
    \path[-]
    (1) edge (2)
    (2) edge (3)
    (2) edge (4)
    (3) edge (5)
    (4) edge (6)
    (1) edge (7)
    (5) edge (8)
    (7) edge (9)
    (6) edge (10)
     ;
     \end{tikzpicture}
     &
    \begin{tikzpicture}[xscale=0.4,yscale=0.4]
    \vertex (1) at (1,2) {};
    \vertex (2) at (2,1) {};
    \vertex (3) at (3,2) {};
    \vertex (4) at (4,1) {};
    \vertex (5) at (5,2) {};
    \vertex (6) at (1,-1){};
    \vertex (7) at (2,0) {};
    \vertex (8) at (3,-1){};
    \vertex (9) at (4,0) {};
    \vertex (10) at (5,-1) {};
    \vertex (11) at (3,3) {};
    \vertex (12) at (3,-2){};
    \vertex (13) at (3,4) {};
    \vertex (14) at (3,-3) {};
    \vertex (15) at (0,3) {};
    \vertex (16) at (6,-2) {};
    \vertex (17) at (6,3) {};
    \vertex (18) at (0,-2) {};
     \path[-]
    (1) edge (2)
    (2) edge (3)
    (3) edge (4)
    (4) edge (5)
    (6) edge (7)
    (7) edge (8)
    (8) edge (9)
    (9) edge (10)
    (2) edge (7)
    (4) edge (9)
    (3) edge (11)
    (8) edge (12)
    (11) edge (13)
    (12) edge (14)
    (1) edge (15)
    (10)edge (16)
    (5) edge (17)
    (6) edge (18)
;
\end{tikzpicture}
     &
 \begin{tikzpicture}
    \vertex (1) at (0,0) {};
    \vertex (2) at (1,0) {};
    \vertex (3) at (2,0) {};
    \vertex (4) at (3,0.5) {};
    \vertex (5) at (3,-0.5) {};
    \vertex (6) at (0,1) {};
    ;
    \path[-]
    (1) edge (2)
    (2) edge (3)
    (3) edge (4)
    (3) edge (5)
    (1) edge (6)
    ;
\end{tikzpicture}
&
\begin{tikzpicture}
    \vertex (2) at (1,0) {};
    \vertex (3) at (2,0) {};
    \vertex (4) at (3,0.5) {};
    \vertex (5) at (3,-0.5) {};
    \vertex (6) at (4,-0.5) {};
    \vertex (7) at (4,0.5) {};
    \vertex (8) at (5,0) {};
    \vertex (9) at (6,0) {};
    \vertex (10) at (1,1) {};
    \vertex (11) at (6,1) {};
    ;
    \path[-]
    (2) edge (3)
    (3) edge (4)
    (3) edge (5)
    (5) edge (6)
    (7) edge (4)
    (6) edge (8)
    (7) edge (8)
    (8) edge (9)
    (9) edge (11)
    (2) edge (10)
    ;
\end{tikzpicture}
\\
\hline
\begin{tikzpicture}
    \vertex (1) at (1,1) {};
    \vertex (2) at (2,1) {};
    \vertex (3) at (3,1) {};
    \vertex (4) at (4,2) {};
    \vertex (5) at (4,0) {};
    \vertex (6) at (0,2) {};
    \vertex (7) at (0,0) {};
    \vertex (8) at (4,1) {};
    \vertex (9) at (0,1) {};
    \path[-]
    (1) edge (2)
    (2) edge (3)
    (3) edge (4)
    (3) edge (5)
    (1) edge (6)
    (1) edge (7)
    (1) edge (9)
    (3) edge (8)
    ;
\end{tikzpicture}
&
\begin{tikzpicture}[yscale=0.9]
    \vertex (1) at (1.5,2) {};
    \vertex (2) at (4,2) {};
    \vertex (3) at (1,1) {};
    \vertex (4) at (1.5,1) {};
    \vertex (5) at (2,1) {};
    \vertex (6) at (3.5,1) {};
    \vertex (7) at (4.5,1) {};
    \vertex (8) at (4,1) {};
    \vertex (9) at (1.5,-1) {};
    \vertex (10) at (4,-1) {};
    \vertex (11) at (1,0) {};
    \vertex (12) at (1.5,0) {};
    \vertex (13) at (2,0) {};
    \vertex (14) at (3.5,0) {};
    \vertex (15) at (4,0) {};
    \vertex (16) at (4.5,0) {};
    \path[-]
    (1) edge (2)
    (1) edge (3)
    (1) edge (4)
    (1) edge (5)
    (2) edge (6)
    (2) edge (7)
    (2) edge (8)
    (9) edge (10)
    (9) edge (11)
    (9) edge (13)
    (9) edge (12)
    (10) edge (14)
    (10) edge (15)
    (10) edge (16)
    (3) edge (12)
    (3) edge (13)
    (4) edge (11)
    (4) edge (13)
    (5) edge (11)
    (5) edge (12)
    (6) edge (15)
    (6) edge (16)
    (8) edge (14)
    (8) edge (16)
    (7) edge (14)
    (7) edge (15)
    ;
\end{tikzpicture}
&
\begin{tikzpicture} 
    \vertex (1) at (0,0) {};
    \vertex (2) at (1,0) {};
    \vertex (3) at (1,1){};
    \vertex (4) at (0,1) {};
    \vertex (5) at (1.5,1.5) {};
    \vertex (6) at (2,2.5) {};
    \vertex (7) at (2.5,3) {};
    \vertex (8) at (2.5,2) {};
    \path[-]
    (1) edge (2)
    (2) edge (3)
    (4) edge (3)
    (1) edge (4)
    (3) edge (5)
    (5) edge (6)
    (6) edge (7)
    (6) edge (8)
     ;
     \end{tikzpicture}
     &
\begin{tikzpicture}[yscale=0.7]
    \vertex (1) at (0,0) {};
    \vertex (2) at (1,0) {};
    \vertex (3) at (1,1){};
    \vertex (4) at (0,1) {};
    \vertex (5) at (1.5,1){};
    \vertex (6) at (3,0) {};
    \vertex (7) at (3.5,1) {};
    \vertex (8) at (4.5,1) {};
    \vertex (9) at (3.5,0) {};
    \vertex (10) at (4.5,0){};
    \vertex (11) at (1.5,2){};
    \vertex (12) at (3,2) {};
    \vertex (13) at (1,3){};
    \vertex (14) at (2,2.5){};
    \vertex (15) at (2.5,2.5){};
    \vertex (16) at (3,3) {};
    \path[-]
    (1) edge (2)
    (2) edge (3)
    (4) edge (3)
    (1) edge (4)
    (7) edge (8)
    (8) edge (10)
    (9) edge (10)
    (7) edge (9)
    (3) edge (5)
    (6) edge (2)
    (7) edge (5)
    (12) edge (6)
    (11) edge (5)
    (9) edge (6)
    (11) edge (13)
    (11) edge (14)
    (12) edge (15)
    (12) edge (16)
    (14) edge (15)
    (13) edge (16)
     ;
     \end{tikzpicture}
     \\
     \hline
     
\end{tabular}}
\caption{}
\label{table:more}
\end{table}
\end{document}